\theoremstyle{plain}
\newtheorem{theorem}{Theorem}
\newtheorem{lemma}{Lemma}
\newtheorem{prop}[lemma]{Proposition}
\numberwithin{equation}{section}
\newcommand{\pr}{^\prime}
\newcommand{\prd}{^{\prime 2}}
\DeclareRobustCommand\widecheck[1]{{\mathpalette\@widecheck{#1}}}
\def\@widecheck#1#2{%
    \setbox\z@\hbox{\m@th$#1#2$}%
    \setbox\tw@\hbox{\m@th$#1%
       \widehat{%
          \vrule\@width\z@\@height\ht\z@
          \vrule\@height\z@\@width\wd\z@}$}%
    \dp\tw@-\ht\z@
    \@tempdima\ht\z@ \advance\@tempdima2\ht\tw@ \divide\@tempdima\thr@@
    \setbox\tw@\hbox{%
       \raise\@tempdima\hbox{\scalebox{1}[-1]{\lower\@tempdima\box
\tw@}}}%
    {\ooalign{\box\tw@ \cr \box\z@}}}
\begin{document}

\author{Valentin Blomer}
\author{V\' \i t\v ezslav Kala}
\address{Mathematisches Institut, Bunsenstr.~3-5, D-37073 G\"ottingen, Germany} \email{blomer@uni-math.gwdg.de} \email{vita.kala@gmail.com}

\title{Number fields without $n$-ary universal quadratic forms}
 
\thanks{First    author  supported by the Volkswagen Foundation and a Starting Grant of the European Research Council. Second author supported by a Starting Grant of the European Research Council.}

\keywords{universal quadratic form, real quadratic number field, continued fraction}

\begin{abstract} Given any positive integer $M$, we show that there are in\-finitely many real quadratic fields that do not admit universal quadratic forms with even cross coefficients in $M$ variables. 
\end{abstract}

\subjclass[2010]{Primary 11E12, 11R11}

\setcounter{tocdepth}{2}  \maketitle 

\section{Introduction} 

A famous theorem of Lagrange states that every positive integer is a sum of four squares, and one can rephrase this by saying that the   quadratic form $x_1^2 + x_2^2 + x_3^2 + x_4^2$ is universal, where here and in the following we call a positive form universal if it represents all positive integers. 

It is a classical fact that there exists no integral, positive,  ternary, universal quadratic forms (see e.g.\ \cite{Ro}), since every such form   misses a whole residue class (such as $x_1^2 + x_2^2+x_3^2 $ misses all integers $\equiv 7$ (mod 8)). Therefore the smallest number of variables, for which positive universal quadratic forms exist, is four. In fact, universal forms over $\mathbb Z$ can be characterized very easily: if they represent $1, 2, \ldots, 15$, then they are universal. This is the 15-theorem of Conway and Schneeberger with its beautiful proof by Bhargava \cite{Bh}. Here and for the rest of the paper we restrict ourselves to \emph{classical} quadratic forms, i.e.\ homogeneous quadratic polynomials with integral coefficients whose off-diagonal coefficients are even. 

Universal quadratic forms have also been investigated over number fields. Chan, Kim and Raghavan \cite{CKR} determined all totally positive universal ternary quadratic forms over $\Bbb{Q}(\sqrt{2})$, $\Bbb{Q}(\sqrt{3})$ and $\Bbb{Q}(\sqrt{5})$  (there are 4, 2, and 5, respectively, up to equivalence) and showed in addition that no other real quadratic number field admits totally positive universal ternary quadratic forms. (A form is called totally positive if it is positive and the form with conjugate coefficients is also positive.) 
The proof uses, among other things,  a theorem of Siegel \cite{Si} which states that in no totally real field other than $\Bbb{Q}$ and $\Bbb{Q}(\sqrt{5})$, every totally positive integer is a sum of \emph{any} number of squares. This is a first indication that more complicated fields might admit fewer universal quadratic forms. Kim \cite{Ki}  showed that for squarefree $D \geq 38446$, the field $\Bbb{Q}(\sqrt{D})$ admits no   diagonal septenary universal form. 

The aim of this paper is a proof that there exist infinitely many fields  that do not admit universal forms of \emph{arbitrary} length, diagonal or not. 

\begin{theorem}\label{thm1} Given any positive integer $M$,  there exist infinitely many real quadratic fields that do not admit classical universal quadratic forms in $M$ variables. 
\end{theorem}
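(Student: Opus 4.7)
The plan is to exploit the rich structure of totally positive indecomposable integers in $\mathcal{O}_K$ for $K = \mathbb{Q}(\sqrt D)$, namely those $\alpha \in \mathcal{O}_K^+$ which cannot be written as a sum of two nonzero totally positive elements. I will choose $D$ so that $K$ contains many such indecomposables with wildly varying Minkowski embeddings, and then show that no single $M$-ary classical form can represent all of them.

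First, I recall the classical parametrization of indecomposable totally positive integers in real quadratic fields by the periodic continued fraction expansion of $\sqrt D$ (or $(1+\sqrt D)/2$ when $D \equiv 1 \pmod 4$). Each partial quotient $a_i$ produces a block of indecomposables $\alpha_k$, $0 \le k \le a_i$, obtained as mediants of successive convergents, and their conjugate ratios $\alpha_k^{(2)}/\alpha_k^{(1)}$ span a correspondingly wide range. By selecting $D$ so that some partial quotient is arbitrarily large (for instance, infinitely many $D$ with $\sqrt D$ having one enormous partial quotient), I can manufacture arbitrarily many indecomposables with pairwise distinct Minkowski profiles in $K$.

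Second, I will establish the key structural lemma: if $Q$ is a classical totally positive $M$-ary quadratic form over $\mathcal{O}_K$ and $Q(\mathbf{y}) = \alpha$ with $\alpha$ totally positive indecomposable, then the representation is highly constrained. Since indecomposability forbids any nontrivial splitting $\alpha = \beta + \gamma$ with $\beta, \gamma \in \mathcal{O}_K^+$, one can show that the support of $\mathbf{y}$ is bounded in terms of $M$ alone and that the ``activated'' Gram entries must have Minkowski size comparable to $\alpha$. Consequently, the set of indecomposables representable by a fixed $Q$ is bounded in size by a function of $M$.

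Combining the two ingredients yields the theorem: for any fixed $M$, the structural lemma caps the number of indecomposables representable by any $M$-ary $Q$, while part one produces, in infinitely many $K = \mathbb{Q}(\sqrt D)$, strictly more inequivalent indecomposables than this cap allows. Thus in each such $K$ there is an indecomposable missed by every $M$-ary classical form, so no such form is universal.

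The main obstacle will be the structural lemma. For non-diagonal classical forms, the cross terms $2 a_{ij} y_i y_j$ can be totally negative, so indecomposability of $\alpha = Q(\mathbf{y})$ does not immediately collapse the support of $\mathbf{y}$ to a single coordinate. I expect to handle this by iteratively partitioning $\{1,\dots,M\}$ into two nonempty subsets $S, S^c$ and writing $Q(\mathbf{y}) = Q(\mathbf{y}_S) + Q(\mathbf{y}_{S^c}) + 2 \langle A \mathbf{y}_S, \mathbf{y}_{S^c} \rangle$; since every principal submatrix of the Gram matrix is totally positive definite, the first two pieces are totally positive, and by choosing $S$ carefully one can force the cross piece to be totally positive as well, triggering indecomposability. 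The secondary task is controlling the continued fraction construction quantitatively enough that the number of produced indecomposables, with sufficiently spread-out embeddings, comfortably exceeds the bound from the structural lemma.
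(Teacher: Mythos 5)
Your overall strategy --- manufacture many totally positive indecomposables from the continued fraction of $\sqrt D$ and show that no $M$-ary classical form can represent them all --- has the same skeleton as the paper's argument, but the load-bearing step, your ``structural lemma'', has a genuine gap, and it is exactly the step you yourself flag as the main obstacle. Indecomposability of $\alpha=Q(\mathbf y)$ does \emph{not} constrain the support of $\mathbf y$, nor does any partition $S\cup S^c$ of the support need to have a totally positive cross term. Already over $\mathbb Z$ the classical form $Q(x,y)=x^2-2xy+2y^2=(x-y)^2+y^2$ represents the indecomposable $1$ at $(1,1)$, where the only nontrivial partition has cross term $-2<0$; the same phenomenon occurs over $\mathcal O_K$, where it cannot in general be removed by a change of basis. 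So the intended conclusion, that the set of indecomposables represented by a fixed $M$-ary classical form is bounded by a function of $M$, does not follow from the partition argument, and without it your counting step in part three never gets off the ground.

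The paper circumvents this by not working with arbitrary indecomposables: it selects elements $a_1=1,a_2,\dots,a_M$ which, besides being indecomposable (Lemma \ref{p3}) and lying in pairwise distinct square classes, satisfy the extra condition that $a_ia_j\succ c^2$ forces $c=0$ (condition \eqref{4} of Proposition \ref{t4}). That condition is precisely what annihilates the off-diagonal Gram entries in the escalation, so that any lattice representing all the $a_i$ must contain the diagonal lattice $\langle a_1,\dots,a_M\rangle$ and hence have rank at least $M$. Verifying \eqref{4} is where essentially all of the work lies: the $a_i$ are taken to be convergents $\alpha_i=p_i+q_i\sqrt D$ of $\sqrt D=[k;\overline{u,\dots,u,2k}]$ (a long period of \emph{equal} partial quotients, rather than one enormous partial quotient), condition \eqref{4} is reduced via the best-approximation property of continued fractions to excluding $\alpha_i\alpha_j\succ\alpha_h^2$ (Lemma \ref{noh} and Proposition \ref{prop11}), and the remaining requirements --- squarefree $D$, squarefree norms $N_i(t)=k_it+1$, distinct square classes --- are secured by the analytic Lemma \ref{squarefree} on simultaneous squarefree values of a quadratic and several linear polynomials. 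None of these points is addressed in your proposal, so as it stands the argument does not close.
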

 
Our proof constructs a very sparse sequence of such fields, about $e^{-cM}\sqrt{X}$ dis\-cri\-mi\-nants $D \leq X$ for some constant $c > 1$,  but it seems likely that such fields occur very frequently. Roughly speaking, if $K = \Bbb{Q}(\sqrt{D})$ has small class number, it contains many   integers of small norm, and we will see that  this often forces a universal form to have many variables. On the other hand, Kim \cite{Ki2} showed that there are infinitely many real quadratic fields that admit universal quadratic forms in 8 variables; these fields are all of the form $\Bbb{Q}(\sqrt{n^2+1})$ and have in particular very large class  number. 

This indicates already the difficulty of the proof of Theorem \ref{thm1}: although we expect that real quadratic fields often have small class number, it is one of the old unsolved problems in number theory to make any substantial progress in this direction. Our proof needs to work around the difficulty that we know very little about real quadratic fields with small class number. 
In the final section we make some explicit calculations for the field $K= \Bbb{Q}(\sqrt{73})$, which has class number 1. \\

\textbf{Acknowledgements.} We would very much like to thank the referee, not only for a careful reading of the manuscript, but also for very concrete and useful suggestions how to streamline the presentation and the argument. 

\section{Generalities} 

Throughout the paper we will use the following notation. Let $K=\mathbb Q(\sqrt D)$ with $D>0$ squarefree. If $D \equiv 2, 3$ (mod 4), then $\mathcal{O}_K = \Bbb{Z}[\sqrt{D}]$. If $D \equiv 1$ (mod 4), then $\mathcal{O}_K = \Bbb{Z}[\omega]$ where $\omega = (1 + \sqrt{D})/2$. In the former case we let $\delta := 2 \sqrt{D}$, in the latter $\delta :=  \sqrt{D}$.  
We write $a \succ b$ to mean that $a - b$ is totally positive, and we denote by $\mathcal O_K^+$ the set of totally positive integers. For $a\in K$ we denote its conjugate by $a^\prime$. The norm of $a$ is denoted by $Na = aa'$. 

In order to avoid repetition, by a   \emph{form} we mean a totally positive quadratic form with coefficients in $\mathcal{O}_K$ whose off-diagonal coefficients are even (i.e.\ divisible by 2).  The restriction to classical forms is quite typical in questions of universality (see e.g.\ \cite{Bh, CKR}); they are
convenient to work with since they can be  represented in the form as $x^{\top} A
x$ where $A$ is a symmetric matrix with integral coefficients. \\ 

We start with some simple lemmas.
\begin{lemma}\label{l1}
Let $a, b\in K$ be totally positive. Then $\sqrt{N(a+b)}\geq \sqrt{N(a)}+\sqrt{N(b)}$.
\end{lemma}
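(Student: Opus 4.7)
The plan is to square both sides, reduce the claim to an AM--GM inequality, and use the total positivity hypothesis to ensure that the relevant quantities are nonnegative so that squaring is reversible.

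First I would expand $N(a+b) = (a+b)(a'+b') = aa' + ab' + a'b + bb' = N(a) + N(b) + (ab' + a'b)$. Since both sides of the desired inequality are nonnegative, squaring yields the equivalent statement
\[
N(a) + N(b) + ab' + a'b \;\geq\; N(a) + N(b) + 2\sqrt{N(a)N(b)},
\]
that is, $ab' + a'b \geq 2\sqrt{N(a)N(b)} = 2\sqrt{aa'bb'}$.

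Now here is where total positivity enters: because $a \succ 0$ and $b \succ 0$, all four of the real numbers $a, a', b, b'$ (viewed through the two real embeddings of $K$) are strictly positive. Hence $ab'$ and $a'b$ are both positive real numbers, and the AM--GM inequality gives
\[
ab' + a'b \;\geq\; 2\sqrt{(ab')(a'b)} \;=\; 2\sqrt{aa'bb'},
\]
which is exactly what we need. Reversing the squaring step (again justified by nonnegativity of the original quantities) concludes the proof.

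The argument is essentially a one-line computation and there is no real obstacle; the only point to be a little careful about is that squaring an inequality is legitimate only when both sides are nonnegative, which is the case here because $a+b$ is totally positive (as a sum of two totally positive elements), so $N(a+b) = (a+b)(a+b)' > 0$, and $N(a), N(b) \geq 0$ for the same reason.
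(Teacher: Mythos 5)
Your proof is correct and is essentially identical to the paper's: both expand $N(a+b) = N(a) + N(b) + ab' + a'b$ and apply AM--GM to the cross terms $ab'$ and $a'b$, using total positivity to ensure these are positive reals. The extra care you take in justifying the reversibility of the squaring step is fine but not a substantive difference.
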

  
 \begin{proof} We have $$N(a+b)= N(a)+N(b)+ab\pr+a\pr b \geq N(a)+N(b)+2\sqrt{aa\pr bb\pr} =  (\sqrt{N(a)}+\sqrt{N(b)})^2.$$ 
 \end{proof}

 \begin{lemma}\label{l2}
Let $a\in\mathcal O_K^+$ be   such that $a\not\in\mathbb Z$ and $a>a\pr$. 
Then $a>\delta$.
\end{lemma}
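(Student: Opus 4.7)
The plan is a direct case analysis based on the congruence class of $D$ modulo $4$, expressing $a$ in the standard $\mathbb{Z}$-basis of $\mathcal{O}_K$ and exploiting the strict inequality $a' > 0$ together with integrality of the coordinates.

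First I would handle the case $D \equiv 2, 3 \pmod 4$. Write $a = x + y\sqrt{D}$ with $x, y \in \mathbb{Z}$. Then $a - a' = 2y\sqrt{D}$, so the hypothesis $a > a'$ forces $y \geq 1$; note $a \notin \mathbb{Z}$ is automatic. I would then split on whether $y = 1$ or $y \geq 2$. If $y \geq 2$, total positivity of $a$ gives $a' = x - y\sqrt{D} > 0$, hence $x > y\sqrt{D} \geq 2\sqrt{D}$, and thus $a = x + y\sqrt{D} > 4\sqrt{D} > \delta$. If $y = 1$, then $a' > 0$ gives $x > \sqrt{D}$, which since $\sqrt{D}$ is irrational means $x \geq \lfloor\sqrt{D}\rfloor + 1 > \sqrt{D}$, so $a = x + \sqrt{D} > 2\sqrt{D} = \delta$.

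Next I would handle $D \equiv 1 \pmod 4$. Write $a = x + y\omega$ with $\omega = (1+\sqrt{D})/2$ and $x,y \in \mathbb{Z}$. A short computation gives $a - a' = y\sqrt{D}$, so $a > a'$ yields $y \geq 1$, while $a \notin \mathbb{Z}$ is again automatic. If $y \geq 2$, then $a' = x + y/2 - y\sqrt{D}/2 > 0$ gives $x + y/2 > y\sqrt{D}/2 \geq \sqrt{D}$, whence $a = (x + y/2) + y\sqrt{D}/2 > \sqrt{D} + \sqrt{D} = 2\sqrt{D} > \delta$. If $y = 1$, the condition $a' > 0$ reads $x > (\sqrt{D}-1)/2$; irrationality of $\sqrt{D}$ promotes this to a strict integer inequality, and adding $(1 + \sqrt{D})/2$ to both sides yields $a > \sqrt{D} = \delta$.

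There is no real obstacle here beyond bookkeeping: the whole argument rests on the simple observation that the strict inequality $a' > 0$, once combined with the fact that the coordinates of $a$ in the integral basis lie in $\mathbb{Z}$ and that $\sqrt{D}$ is irrational, already supplies the necessary lower bound on $a$. The two cases (the parity of $D \bmod 4$) and the two subcases (whether the irrational coordinate equals $1$ or is at least $2$) are all mechanical.
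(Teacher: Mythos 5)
Your proof is correct and follows essentially the same route as the paper's: express $a$ in the integral basis, use $a>a'$ to force the irrational coordinate to be positive, and use $a'>0$ to bound the rational part from below, treating the two residue classes of $D$ modulo $4$ separately. The extra subdivision into $y=1$ versus $y\geq 2$ is harmless but unnecessary, since the single chain $a = x + y\sqrt{D} > 2y\sqrt{D} \geq \delta$ (suitably adapted in the $D\equiv 1 \pmod 4$ case) covers all $y\geq 1$ at once.
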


\begin{proof}
Let $a=x+y\sqrt D$ and assume first that $x, y\in\mathbb Z$. Since $a\not\in\mathbb Z$, we have $y\neq 0$. Since $a>a\pr$ is totally positive, it follows that $x>0$ and $y>0$. Since $a\pr>0$, we have $x>y\sqrt D\geq \sqrt D$. Thus $a=x+y\sqrt D>2\sqrt D = \delta$.

If $x, y$ can be half-integers, we show the result in the same way.
\end{proof}

\begin{lemma}\label{p3}
Let $\alpha\in\mathcal O_K^+$. If $N\alpha \leq \delta$ and $n\nmid \alpha$ for all $2\leq n\in\mathbb Z$, then $\alpha$ is not a sum of two totally positive integers.
\end{lemma}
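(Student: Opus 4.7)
The plan is to argue by contradiction. Suppose $\alpha=\beta+\gamma$ with $\beta,\gamma\in\mathcal O_K^+$; I will split into cases according to whether each of $\beta,\gamma$ lies in $\mathbb Z$ and, in the remaining sub-case, the signs of $\beta-\beta\pr$ and $\gamma-\gamma\pr$. Lemma~\ref{l2} supplies the quantitative input for the preliminary cases, and the primitivity assumption $n\nmid\alpha$ is invoked only in the very last step.

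\medskip

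I handle the easy cases first. If both $\beta,\gamma\in\mathbb Z$, then $\alpha\in\mathbb Z_{\geq 2}$ and $\alpha$ itself is a rational integer $\geq 2$ dividing $\alpha$, a contradiction. If exactly one of them, say $\beta$, lies in $\mathbb Z$ while $\gamma\notin\mathbb Z$, then Lemma~\ref{l2} applied to whichever of $\gamma,\gamma\pr$ is larger yields a conjugate of $\alpha$ strictly exceeding $\delta$, while the other conjugate is $\geq\beta\geq 1$; hence $N\alpha>\delta$. If both $\beta,\gamma\notin\mathbb Z$ and $\beta-\beta\pr$, $\gamma-\gamma\pr$ have opposite signs, then Lemma~\ref{l2} applied to the larger of each pair forces $\alpha>\delta$ and $\alpha\pr>\delta$ simultaneously, so $N\alpha>\delta^2>\delta$ (using $\delta\geq\sqrt 5>1$).

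\medskip

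The heart of the proof is the remaining sub-case: $\beta,\gamma\notin\mathbb Z$ with $\beta-\beta\pr$ and $\gamma-\gamma\pr$ of the same sign, which after Galois-conjugating $(\beta,\gamma)$ simultaneously if needed we may take to be both positive. I write $\beta=a_1+b_1\sqrt D$ and $\gamma=a_2+b_2\sqrt D$ with $a_i,b_i\in\tfrac12\mathbb Z$ (integers when $D\equiv 2,3\pmod 4$, possibly half-integers when $D\equiv 1\pmod 4$). Direct expansion of $\beta\gamma\pr$ combined with $N(\beta\gamma\pr)=N\beta\cdot N\gamma$ yields the identity
\[\mathrm{Tr}(\beta\gamma\pr)^2\;=\;4\,N\beta\cdot N\gamma\;+\;4D\,(a_2b_1-a_1b_2)^2,\]
which together with $N\alpha=N\beta+N\gamma+\mathrm{Tr}(\beta\gamma\pr)$ I exploit via a dichotomy on the cross term $a_2b_1-a_1b_2$.

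\medskip

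If $a_2b_1\ne a_1b_2$, then $(a_2b_1-a_1b_2)^2\geq 1$ for $D\equiv 2,3\pmod 4$ and $\geq 1/4$ for $D\equiv 1\pmod 4$; in either case, using $N\beta,N\gamma\geq 1$, the identity gives $\mathrm{Tr}(\beta\gamma\pr)\geq 2\sqrt{1+D}$ (respectively $\sqrt{4+D}$), which already exceeds $\delta=2\sqrt D$ (respectively $\sqrt D$). Hence $N\alpha\geq 2+\mathrm{Tr}(\beta\gamma\pr)>\delta$, contradicting the norm bound. If instead $a_2b_1=a_1b_2$, then $(a_1,b_1)$ and $(a_2,b_2)$ are $\mathbb Q$-proportional, so one can write $\beta=s\mu$ and $\gamma=r\mu$ for positive integers $r,s$ and some $\mu\in\mathcal O_K$; then $\alpha=(r+s)\mu$ is divisible in $\mathcal O_K$ by the rational integer $r+s\geq 2$, contradicting primitivity. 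The one step requiring care is the uniform treatment of the two congruence classes of $D\bmod 4$: in the $D\equiv 1$ case the cross-term bound degrades to $1/4$ while $\delta$ also shrinks from $2\sqrt D$ to $\sqrt D$, and one has to verify that the weaker bound $\mathrm{Tr}(\beta\gamma\pr)\geq\sqrt{D+4}$ is still strong enough to beat $\delta=\sqrt D$.
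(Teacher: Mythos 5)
Your proof is correct and follows essentially the same route as the paper's: both hinge on the dichotomy of whether the cross term $\beta\gamma'$ is rational (your condition $a_2b_1=a_1b_2$), bound $N\alpha$ from below via that cross term when it is not, and extract a rational integer divisor $u+v\geq 2$ of $\alpha$ from the proportionality $\gamma=(u/v)\beta$ when it is. The differences are cosmetic: the paper gets the lower bound by applying Lemma~\ref{l2} directly to the totally positive element $\beta\gamma'$, whereas you re-derive the same estimate from the explicit identity for $\mathrm{Tr}(\beta\gamma')^2$, and your preliminary case distinctions (integrality of $\beta,\gamma$, signs of $\beta-\beta'$, $\gamma-\gamma'$) are in fact already subsumed by the main dichotomy.
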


\begin{proof}
Assume that $\alpha=a+b$ with $a$, $b$ totally positive. If $ab\pr\not\in\mathbb Z$, then without loss of generality assume that $ab\pr>a\pr b$. It follows from  Lemma \ref{l2} that
$ab\pr>\delta$. This is a contradiction, since then $N(\alpha)=N(a)+N(b)+ab\pr+a\pr b>ab\pr>\delta$. Hence $ab\pr=a\pr b\in\mathbb N$, so that $q:=\frac{ab\pr}{N(a)} \in \Bbb{Q}$, say $q = \frac{u}{v}$ with coprime $u, v \in \Bbb{N}$. 
 Since $a, b\in\mathcal O_K$ and $b=\frac uv a$, we have that $v\mid a$. 
Then $a=vc$ and $b=uc$ for some $c\in\mathcal O_K$ and $\alpha=(u+v)c$, a contradiction.
\end{proof}

We can now make more precise the final remarks from the introduction: if  $K$ has sufficiently many elements of small norm satisfying some conditions, then it does not admit universal forms with few variables.

\begin{prop}\label{t4}
Assume that there exist $1=a_1, a_2, \dots, a_M\in\mathcal O_K^+$ such that for all $1 \leq i \not = j \leq M$ we have that 
\begin{enumerate}
\item\label{1}  $Na_i \leq \delta$,
\item\label{2} $n\nmid a_i $ for  $2\leq n\in\mathbb Z$, 
\item\label{3} $a_i$ and $a_j$ are not in the same square class, i.e.\ there exists no $x \in \mathcal{O}_K$ such that $a_i = a_j x^2$, and 
\item\label{4} If $a_ia_j \succ c^2$ for   $c \in \mathcal{O}_K$, then $c= 0$. 
\end{enumerate}
Then there are no universal classical totally positive $(M-1)$-ary quadratic forms over $\mathcal O_K$. 

Condition \eqref{4} is implied by 
\begin{enumerate} \setcounter{enumi}{4}
\item\label{5} $N(a_i)\leq\delta^{1/2}$ and $n \nmid a_ia_j$ for $2\leq n\in\mathbb Z$.
\end{enumerate}
\end{prop}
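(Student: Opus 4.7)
The plan is to assume for contradiction that $Q$ is a classical totally positive universal quadratic form over $\mathcal{O}_K$ in $M-1$ variables, with associated symmetric bilinear form $B$. Universality supplies integral representations $Q(\mathbf{x}^{(i)})=a_i$ with $\mathbf{x}^{(i)}\in\mathcal{O}_K^{M-1}$ for each $i$. The objective is to show that these $M$ nonzero vectors are pairwise $B$-orthogonal; since $Q$ is totally positive definite, pairwise $B$-orthogonality of nonzero vectors implies $K$-linear independence, and $M$ linearly independent vectors cannot sit inside the $(M-1)$-dimensional space $K^{M-1}$---this will be the contradiction.

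To establish pairwise orthogonality, fix $i\neq j$ and put $T_{ij}:=B(\mathbf{x}^{(i)},\mathbf{x}^{(j)})\in\mathcal{O}_K$. For each of the two real embeddings $\sigma:K\to\mathbb{R}$, the form $Q^\sigma$ is a positive definite real quadratic form on $\mathbb{R}^{M-1}$, and Cauchy--Schwarz gives $\sigma(T_{ij})^2\leq\sigma(a_ia_j)$, with equality if and only if the images of $\mathbf{x}^{(i)}$ and $\mathbf{x}^{(j)}$ become proportional in $\mathbb{R}^{M-1}$ under $\sigma$. If $\mathbf{x}^{(i)}$ and $\mathbf{x}^{(j)}$ are $K$-linearly independent, then some $2\times 2$ minor of their coordinate matrix is a nonzero element of $\mathcal{O}_K$ and therefore remains nonzero under either embedding; the Cauchy--Schwarz inequality is then strict in both embeddings, so $a_ia_j\succ T_{ij}^2$, and condition \eqref{4} forces $T_{ij}=0$. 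In the alternative case $\mathbf{x}^{(j)}=c\,\mathbf{x}^{(i)}$ for some $c\in K^*$, we have $a_j=c^2 a_i$, which places $a_i$ and $a_j$ in the same square class and contradicts \eqref{3}.

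For the final assertion, suppose \eqref{5} holds and $a_ia_j\succ c^2$ for some $c\in\mathcal{O}_K$ with $c\neq 0$. Then $c^2$ and $a_ia_j-c^2$ are both totally positive, exhibiting $a_ia_j$ as a sum of two totally positive integers. But \eqref{5} yields $N(a_ia_j)=N(a_i)\,N(a_j)\leq\delta^{1/2}\cdot\delta^{1/2}=\delta$ together with $n\nmid a_ia_j$ for every integer $n\geq 2$, so Lemma \ref{p3} applied with $\alpha=a_ia_j$ precludes such a decomposition and forces $c=0$.

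The step I expect to demand the most care is the proportional sub-case above: the scalar $c$ lives a priori only in $K^*$, while \eqref{3} is phrased in terms of $x\in\mathcal{O}_K$. Writing $c=\alpha/\beta$ in lowest terms with coprime $\alpha,\beta\in\mathcal{O}_K$ and extracting a common factor from $\beta\mathbf{x}^{(j)}=\alpha\mathbf{x}^{(i)}$ produces $\mathbf{w}\in\mathcal{O}_K^{M-1}$ with $a_i=\beta^2 Q(\mathbf{w})$ and $a_j=\alpha^2 Q(\mathbf{w})$; any inert or ramified prime-ideal factor of $\beta$ then forces a rational integer $n\geq 2$ to divide $a_i$, contradicting \eqref{2}, and the split-prime case is pinned down via the norm bound $(N\beta)^2\leq Na_i\leq\delta$ from \eqref{1} together with further use of \eqref{2}, so that $\alpha,\beta$ must be units and \eqref{3} is contradicted in its literal form.
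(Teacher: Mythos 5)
Your treatment of the generic case is fine and is really the paper's argument in different notation: the strict Cauchy--Schwarz inequality in both embeddings is exactly the total positivity of the $2\times 2$ principal minors $a_ia_j-c^2$ of the Gram matrix that the paper invokes, and your derivation of \eqref{4} from \eqref{5} via Lemma \ref{p3} matches the paper verbatim. The genuine gap is precisely where you flagged it, the proportional case, and your proposed repair does not close it. First, writing $c=\alpha/\beta$ ``in lowest terms'' with coprime $\alpha,\beta\in\mathcal{O}_K$ is not available in general: the proposition is stated for arbitrary real quadratic $K$, the class number may exceed one, and the fractional ideal $(c)$ factors into coprime integral ideals that need not be principal. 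Second, and more seriously, even granting such a decomposition the split-prime case does not follow from what you cite. A split prime $\mathfrak{p}\mid(\beta)$ gives $\mathfrak{p}^2\mid(a_i)$, which is perfectly compatible with condition \eqref{2} (that condition only forbids \emph{rational} integer divisors, and $\mathfrak{p}^2\mid(a_i)$ with $\overline{\mathfrak{p}}\nmid(a_i)$ violates nothing), while the norm bound only yields $N(\beta)\le\delta^{1/2}$, which does not force $\beta$ to be a unit once $D$ is large. Concretely, nothing in \eqref{1}--\eqref{5} excludes a configuration $a_i=\beta^2\sigma$, $a_j=\alpha^2\sigma$ with $\sigma$ a non-square of small norm and $\alpha,\beta$ totally positive generators of split prime ideals avoiding their conjugates; a form representing $\sigma$ primitively by some $\mathbf{w}$ then represents $a_i$ and $a_j$ by the proportional vectors $\beta\mathbf{w}$ and $\alpha\mathbf{w}$, and your pairwise-orthogonality claim fails for that choice of representing vectors.

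It is telling that your main argument uses only conditions \eqref{3} and \eqref{4}, whereas the paper's escalation makes essential use of \eqref{1} and \eqref{2} through Lemma \ref{p3}. The paper shows inductively that $L$ contains the orthogonal sublattice $L_{i-1}=\langle a_1,\dots,a_{i-1}\rangle$ and that $L_{i-1}$ does not represent $a_i$: a representation $a_i=\sum_{j<i}a_jx_j^2$ with $x_j\in\mathcal{O}_K$ either has at least two nonzero summands, each of which is an integral totally positive element, contradicting Lemma \ref{p3}, or reduces to $a_i=a_jx_j^2$ with $x_j$ \emph{automatically integral}, contradicting \eqref{3}. This organization is what keeps all the coefficients in $\mathcal{O}_K$ and avoids the $K^*$-proportionality issue you ran into. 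To salvage your version you would need to rule out dependent representing vectors by an argument of this type rather than by appealing to \eqref{3} with a non-integral scalar.
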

 
\begin{proof}  That \eqref{4} is implied by \eqref{5} follows from Lemma \ref{p3} applied to $a_ia_j$. 

The proof  of the proposition uses the language of lattices and is inspired by the escalation technique of Bhargava \cite{Bh}. Let $L_i=\langle a_1, \dots, a_i\rangle$ be the lattice associated to the diagonal form $\sum_{j\leq i} a_jx_j^2$. Let $L$ be a lattice representing $a_1, \ldots, a_M$. 
In particular it must have a vector of length $a_1 = 1$. Since $a_2$ is not a square, $L_1$ does not represent $a_2$, so $L$ must contain $$\left(\begin{matrix} a_1 & c\\ c & a_2\end{matrix}\right)$$
for some $c \in \mathcal{O}_K$. Since $L$ is totally positive, we have  $a_1a_2-c^2 \succ 0$,  so that  $c = 0$, and so $L$ contains   $L_2$. Having shown by induction that $L$ must contain $L_{i-1}$, we proceed similarly:  each of the  $a_{i}$ satisfies the assumptions of Lemma  \ref{p3},  so it is not a sum of totally positive elements.  Hence if $L_{i-1}$ represents it, we must have $a_i=a_jx_j^2$ for some $j \leq i-1$ which is not possible by assumption. 
Thus $L_{i-1}$ does not represent $a_{i}$, so  we need a linearly independent vector of length $a_i$. Hence $L$ contains $$\left(\begin{matrix} L_{i-1}& c\\ c & a_{i}\end{matrix}\right)$$ for some $c \in \mathcal{O}_K^{i-1}$. Considering $2$-by-$2$ subdeterminants, we see as above that each entry of $c$ must be 0.  It follows that $L$ contains $L_i$. 
In particular, we conclude that there cannot be a universal lattice of dimension $M-1$. 
\end{proof}

As mentioned in the introduction, for any given $M$, fields with the properties of Proposition \ref{t4} should exist in abundance, yet their existence is not easy to prove. 

\begin{prop} Assume  the Riemann hypothesis for $L(s, \chi_D)$, and that 
$K = \Bbb{Q}(\sqrt{D})$ has  narrow class number one.  Then for each $\varepsilon > 0$ there exists a  constant $C(\varepsilon)$ such that 
 a (classical) totally positive universal quadratic form needs at least $C(\varepsilon) D^{1/4 - \varepsilon}$ variables. 
\end{prop}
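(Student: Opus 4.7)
The plan is to invoke Proposition \ref{t4} with the (simpler) hypothesis \eqref{5}. The crucial input is the narrow class number one assumption, which guarantees that every prime ideal $\mathfrak p \subset \mathcal O_K$ has a totally positive generator. I restrict to prime ideals of degree one, i.e.\ those $\mathfrak p$ sitting above a rational prime $p$ that splits in $K$, and for each such $p$ I pick one of the two totally positive generators of the primes above $p$, calling it $\pi_p$ (so $N(\pi_p) = p$). My candidate collection is $\{1\} \cup \{\pi_p : p \leq X, \ p \text{ split in } K\}$, where $X := \delta^{1/2}$.

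The four conditions of Proposition \ref{t4} are essentially immediate for this family. Condition \eqref{1} and the norm bound in \eqref{5} hold since $N(\pi_p) = p \leq X = \delta^{1/2} \leq \delta$. For \eqref{2}, if $n \mid \pi_p$ with $n \in \mathbb Z_{\geq 2}$, then $n^2$ would divide the prime $N(\pi_p) = p$, which is absurd. For \eqref{3}, if $\pi_p = \pi_q x^2$ with $p \neq q$, then taking the ideal generated by both sides gives $\mathfrak p \mathfrak q^{-1} = (x)^2$, impossible by unique factorization of fractional ideals. The second half of \eqref{5} is analogous: if $n \mid \pi_p \pi_q$ with $n \geq 2$ and $p \neq q$, then $n^2 \mid pq$, forcing $n = 1$.

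What remains is to count rational primes $p \leq X = \delta^{1/2} \asymp D^{1/4}$ that split in $K$, i.e.\ satisfy $\chi_D(p) = 1$. Writing
\[
\#\{p \leq X : p \text{ split in } K\} = \tfrac{1}{2}\pi(X) + \tfrac{1}{2}\sum_{p \leq X} \chi_D(p),
\]
the Riemann hypothesis for $L(s,\chi_D)$ yields the standard bound $\sum_{p \leq X} \chi_D(p) \ll X^{1/2}(\log DX)^2$ via Perron's formula and a contour shift to the critical line. For $X \asymp D^{1/4}$ the main term $\tfrac12 \pi(X) \asymp D^{1/4}/\log D$ thoroughly dominates this error, so the number of split primes of norm at most $X$ is $\gg D^{1/4}/\log D$. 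Combining with Proposition \ref{t4}, any classical totally positive universal form needs more than $D^{1/4}/\log D$ variables, which exceeds $C(\varepsilon) D^{1/4-\varepsilon}$ for $D$ large enough depending on $\varepsilon$.

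I do not anticipate a serious obstacle: the narrow class number one hypothesis trivializes both the existence of totally positive generators and the verification of \eqref{2}, \eqref{3}, \eqref{5}, while under RH the prime count for $\chi_D$ is entirely standard in the modest range $X \asymp D^{1/4}$. The only mildly delicate point is being careful that the error term in the prime sum is truly negligible for such a small $X$ relative to the conductor $D$, but the $X^{1/2}(\log DX)^2$ bound has plenty of room.
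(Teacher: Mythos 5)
Your argument is correct, and it takes a genuinely different route from the paper's. The paper does not restrict to primes: it takes totally positive generators $\alpha(n)$ of ideals of squarefree norm $n \leq \delta^{1/2}$ for \emph{all} such $n$ with $r(n) = \sum_{d\mid n}\chi_D(d) \neq 0$, and counts them by Mellin inversion of $\sum_n \mu^2(n) r(n) n^{-s} = \zeta(s)L(s,\chi_D)H(s)$, using RH twice --- once via Lindel\"of to control the shifted contour, and once via Littlewood's bound $L(1,\chi_D) \gg 1/\log\log D$ to make the residue at $s=1$ large; the divisor-function bound $r(n) \ll n^{\varepsilon}$ is what costs the $D^{-\varepsilon}$ in the final answer. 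Your version keeps only the split primes $p \leq \delta^{1/2}$, for which the verification of conditions \eqref{1}--\eqref{3} and \eqref{5} of Proposition \ref{t4} is indeed immediate from $N(\pi_p)=p$ prime, and replaces the whole analytic apparatus by the standard GRH estimate $\sum_{p\leq X}\chi_D(p) \ll X^{1/2}\log^2(DX)$, which at $X \asymp D^{1/4}$ is comfortably dominated by $\tfrac12\pi(X)$ (modulo the harmless $O(\log D)$ ramified primes you absorb into the error). What you lose is the larger pool of elements the paper's construction provides (composite squarefree norms); what you gain is a cleaner proof and in fact a slightly stronger conclusion, $\gg D^{1/4}/\log D$ variables rather than $C(\varepsilon)D^{1/4-\varepsilon}$, since no divisor-bound loss is incurred. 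Both arguments lean on the narrow class number one hypothesis in exactly the same way, namely to produce totally positive generators.
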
 


\begin{proof} Let $I_n$ be the set of ideals of norm $n$, and let $r$ be the arithmetic function defined by $$r(n) =|I_n| =  \sum_{d \mid n} \chi_D(d).$$
Clearly $r(n)$ is bounded by the divisor function which itself is bounded by $c(\varepsilon)n^{\varepsilon}$ for every $\varepsilon > 0$. If $n$ is squarefree and $\mathfrak{a}   = \prod_{j} \mathfrak{p}_j \in I_n$ with distinct prime ideals $\mathfrak{p}_j$, then every ideal in $I_n$ is of the form $  \prod_{j} \tilde{\mathfrak{p}}_j $ where $\tilde{\mathfrak{p}}_j \in \{\mathfrak{p}_j, \mathfrak{p}'_j\}$. For each pair $ \{\mathfrak{p}, \mathfrak{p}'\}$ fix one of the two prime ideals, and for each squarefree $n$ for which $I_n \not= \emptyset$ fix a totally positive generator $\alpha(n)$ of the unique element of $I_n$ that is the product of our selected prime ideals. 
Then the set of all $\alpha(n)$ with $n \leq \delta^{1/2}$ satisfies the assumptions of Proposition \ref{t4}. If $\mu$ denotes the M\"obius function, then their cardinality is
$$\sum_{\substack{n\leq \delta^{1/2}\\ r(n) \not= 0}} \mu^2(n) \geq c(\varepsilon) D^{-\varepsilon} \sum_{n \leq \delta^{1/2}} \mu^2(n) r(n)  \geq c(\varepsilon) D^{-\varepsilon} \sum_{ n\leq \delta^{1/2} } \mu^2(n) r(n) \Bigl( 1- \frac{n}{\delta^{1/2}}\Bigr).$$
By Mellin inversion, the sum on the right hand side equals
$$\int_{2-i\infty}^{2+\infty} \sum_{n } \frac{r(n)\mu^2(n)}{n^s} \frac{\delta^{s/2}}{s(s+1)} \frac{ds}{2\pi i} = \int_{2-i\infty}^{2+\infty}  \zeta(s) L(s, \chi_D)  H(s)  \frac{ \delta^{s/2}}{s(s+1)} \frac{ds}{2\pi i},$$
where the Euler product 
 $$H(s) = \prod_{\chi_D(p) = 1}\left(1 - \frac{3}{p^{2s}} + \frac{2}{p^{3s}}\right) \prod_{\chi_D(p) \not= 1} \left(1 - \frac{1}{p^{2s}}\right) $$
is absolutely convergent and uniformly bounded from above and below in $\Re s \geq 1/2 + \varepsilon$. We evaluate the integral by shifting the contour to line $\Re s =  2/3$, say,  and picking up the residue of the pole at $s=1$. The Riemann hypothesis implies the Lindel\"of hypothesis \cite[p.\ 116]{IK}, so that $L(2/3+ it , \chi_D) \ll_{\varepsilon}  ((1+|t|)D)^{\varepsilon}$. Hence 
$$\int_{2-i\infty}^{2+i\infty}  \zeta(s) L(s, \chi_D)  H(s)  \frac{\delta^{s/2}}{s(s+1)} \frac{ds}{2\pi i} = \frac{1}{2}L(1, \chi_D)H(1)  \delta^{1/2}+ O(\delta^{1/3} D^{\varepsilon}) \gg \frac{D^{1/4}}{\log\log D},$$
since the Riemann hypothesis also implies $L(1, \chi_D) \gg 1/\log \log D$ \cite{Li}. This completes the proof. 
 \end{proof}

\emph{Remark:} Replacing ideals $\mathfrak{a} \in I_n$ by $\mathfrak{a}^h$ if the class number of $K$ is $h \geq 1$, the same proof shows that   a (classical) totally positive universal quadratic form needs in general at least $C(\varepsilon) D^{1/(4h) - \varepsilon}$ variables.

 

\section{Squarefree values of quadratic and linear polynomials}

The aim of this section is a proof of the following essentially classical result.

\begin{lemma}\label{squarefree}
Let $f(x) = ax^2 + bx + c$ be an integral quadratic polynomial with discriminant $\Delta = b^2 - 4ac \not = 0$.  For $j = 1, \ldots, m$ let  $g_j(x) = k_j x + r_j$ be    linear integral polynomials.  Assume that each of $f, g_1, \ldots, g_m$ takes at least one squarefree value. 
Then for a positive  proportion of natural numbers $n$, the values $f(n), g_1(n), \ldots, g_m(n)$ are simultaneously squarefree. 
More precisely, the asymptotic formula
$$\mathcal{S} := \sum_{ n \leq X } \mu^2(f(n))  \prod_{j=1}^m \mu^2(g_j(n)) = CX + O\left(X^{1 - \frac{2}{3(m+1)} + \varepsilon}\right)$$
holds for a constant $C > 0$. Here $C$ and the $O$-constant depend on $f$ and the $g_j$. 
\end{lemma}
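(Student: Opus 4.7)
The plan is to combine standard Möbius inversion with a Hooley-type device for the quadratic $f$; the linear factors $\mu^2(g_j(n))$ will be absorbed by elementary sieve estimates. I would begin by expanding
$$\mathcal{S} = \sum_{d_0, d_1, \ldots, d_m \geq 1} \mu(d_0) \mu(d_1) \cdots \mu(d_m)\, N(\mathbf{d}, X),$$
where $N(\mathbf{d}, X) = \#\{n \leq X : d_0^2 \mid f(n),\ d_j^2 \mid g_j(n)\ \text{for } 1 \leq j \leq m\}$, and then introduce a threshold $y \in [1, X]$ to be chosen later, splitting according to whether $\max_j d_j \leq y$ (the head) or not (the tail).

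For the head, the Chinese remainder theorem gives $N(\mathbf{d}, X) = X \rho(\mathbf{d})/L(\mathbf{d}) + O(\rho(\mathbf{d}))$, where $\rho$ counts joint residue classes modulo the lcm $L$ of the $d_j^2$. Hensel-type bounds yield $\rho(\mathbf{d}) \ll (d_0 \cdots d_m)^\varepsilon$, which lets one extend each $d_j$ to infinity and produce the leading constant $C$. Positivity of $C$ follows from the hypothesis that each of $f, g_1, \ldots, g_m$ takes at least one squarefree value, since this ensures no local density vanishes. The extension incurs a main-term error $\ll X/y^{1-\varepsilon}$, and the $O(\rho)$ remainder summed over the cube $[1,y]^{m+1}$ contributes $\ll y^{m+1+\varepsilon}$.

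The tail must be controlled polynomial-by-polynomial. For each linear $g_j$, the set $\{n \leq X : \exists d > y,\ d^2 \mid g_j(n)\}$ has size $\ll \sum_{y < d \ll \sqrt{X}} (X/d^2 + 1) \ll X/y + \sqrt{X}$. For the quadratic $f$, the moderate range $y < d \leq X^{1/2}$ again gives $\ll X/y^{1-\varepsilon}$ via the bound $\rho_f(d^2) \ll 2^{\omega(d)}$, but the large range $d > X^{1/2}$ is the main obstacle. Here I would invoke Hooley's device using $4a f(n) = (2an+b)^2 - \Delta$: setting $u = 2an+b$, the divisibility $d^2 \mid f(n)$ becomes $u^2 \equiv \Delta \pmod{d^2}$ with $|u| \ll X$, and counting such pairs by parameterizing $u^2 - \Delta = d^2 \ell$ (so $\ell$ is small compared with $X$) and swapping the order of summation yields a power-saving bound.

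Finally, I would select $y$ as an appropriate power of $X$ to balance $X/y^{1-\varepsilon}$, $y^{m+1+\varepsilon}$, and the Hooley contribution, arriving at the stated exponent $1 - 2/(3(m+1)) + \varepsilon$. The hard part is unambiguously the Hooley-type estimate for $d > \sqrt{X}$; everything else --- CRT computation, extension of the inclusion-exclusion sums to infinity, treatment of the linear tails, and parameter optimization --- is standard sieve bookkeeping.
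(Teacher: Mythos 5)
Your strategy is essentially the one the paper follows: expand the $\mu^2$'s by M\"obius inversion, compute the main term by the Chinese remainder theorem and an Euler product whose positivity comes from the hypothesis that none of $f, g_1, \ldots, g_m$ has a fixed square divisor, handle the linear tails elementarily, and dispose of large square divisors of the quadratic by the Estermann--Hooley Pellian count applied to $4af(n)=(2an+b)^2-\Delta$. The paper organizes the bookkeeping slightly differently (it truncates the linear variables at a threshold $Y$, controlling the completion error by Rankin's trick, before expanding $\mu^2(f(n))$ up to a second, independent threshold $Z$), but those differences are cosmetic.

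The one step that does not work as written is your treatment of the quadratic tail. You fix the elementary/Hooley crossover at $d = X^{1/2}$ and assert that for $d > X^{1/2}$ the parameterization $u^2 - \Delta = d^2\ell$ has ``$\ell$ small compared with $X$''. It does not: since $|u| \ll X$ and $d$ can be as small as $X^{1/2}$, the complementary divisor $\ell$ ranges over an interval of length $\asymp X$, and the Pell-type count of $O(\log X)$ solutions $(u,d)$ per fixed $\ell$ then yields only $O(X\log X)$ --- no saving at all over the trivial bound. The crossover must itself be a parameter $Z$ equal to a power of $X$ strictly larger than $X^{1/2}$: then $\ell \ll X^{2}/Z^{2}$, so the Hooley range contributes $O(X^{2+\varepsilon}Z^{-2})$, while the elementary range $y < d \leq Z$ costs an extra $O(Z^{1+\varepsilon})$ from the $O(\rho_f(d^2))$ remainders in the lattice-point count; balancing these forces $Z = X^{2/3}$, which is exactly the paper's choice. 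With that repair, your remaining optimization in $y$ recovers the stated exponent $1-\frac{2}{3(m+1)}+\varepsilon$, and the argument is complete.
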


\emph{Remark:} By replacing $f(x)$ with $f(4x)$, we can also guarantee  $f(n) \equiv c \pmod 4$. 
 
\begin{proof}  
  We have 
$$\mathcal{S} =  
\sum_{ n \leq X }\mu^2(f(n)) \prod_{j=1}^m \sum_{d_j^2 \mid g_j(n)} \mu(d_j). $$
Let $Y \geq 2$ be a parameter to be chosen later. We would like to approximate $\mathcal{S}$ by
$$\mathcal{S}_0 := \sum_{ n \leq X  }\mu^2(f(n)) \prod_{j=1}^m \sum_{\substack{ d_j^2 \mid g_j(n)\\ d_j \leq Y}} \mu(d_j).$$
To estimate the error, let $1 \leq \mu \leq m$ and define
$$\mathcal{S}_{\mu} := \sum_{ n \leq X }\mu^2(f(n)) \prod_{j=1}^{\mu-1} \sum_{\substack{d_j^2 \mid g_j(n) \\ d_j   \leq Y}}\mu(d_j) \sum_{\substack{d_{\mu}^2 \mid g_{\mu}(n) \\ d_{\mu} > Y} } \mu(d_{\mu}) \prod_{j=\mu+1}^m \sum_{d_j^2 \mid g_j(n)}  \mu(d_j).$$
Then 
\begin{displaymath}
\begin{split}  |\mathcal{S}_{\mu} | 
\leq &  \sum_{n \leq X}  \prod_{j=1}^{\mu-1} \sum_{\substack{d_j^2 \mid g_j(n) \\ d_j   \leq Y}}\mu^2(d_j)  \sum_{\substack{d_{\mu}^2 \mid g_{\mu}(n) \\ d_{\mu} > Y} } \mu^2(d_{\mu})   \leq    \sum_{ \substack{d_1, \ldots, d_{\mu-1}   \leq Y\\ Y < d_{\mu} \leq \sqrt{|k_{\mu}|X + |r_{\mu}|}}} \mu^2(d_1) \cdots \mu^2(d_{\mu}) \sum_{\substack{n \leq X \\ d_j^2 \mid g_j(n) } } 1 \\
\leq  & \sum_{ \substack{d_1, \ldots, d_{\mu-1}  \leq Y\\ Y < d_{\mu} \leq \sqrt{|k_{\mu}|X + |r_{\mu}|}}}  \left(\frac{\mu^2(d_1) \cdots \mu^2(d_{\mu}) }{[d_1^2, \ldots, d_{\mu}^2]} X + O(1)\right)\\
  \leq  & X \sum_{  d_1, \ldots, d_{\mu-1}} \sum_{d_{\mu} > Y}    \frac{\mu^2(d_1) \cdots \mu^2(d_{\mu}) }{[d_1^2, \ldots, d_{\mu}^2]} + O\left(Y^{\mu-1}X^{1/2}\right). 
\end{split}
\end{displaymath}
To estimate the multiple sum, we apply Rankin's trick and choose $0 < s < 1$. Then the sum is bounded by
\begin{equation}\label{trick}
 \frac{1}{Y^s} \sum_{  d_1, \ldots, d_{\mu}} \frac{\mu^2(d_1) \cdots \mu^2(d_{\mu}) d_{\mu}^s}{[d_1^2, \ldots, d_{\mu}^2]} \leq \frac{1}{Y^s} \prod_p \left(1 + \frac{2^{\mu} p^s}{p^2}\right) \ll Y^{-s}.
 \end{equation}
With $s = 1 - \varepsilon$ we conclude $\mathcal{S}_{\mu} \ll XY^{\varepsilon - 1} + Y^{\mu-1} X^{1/2}$, and hence
\begin{displaymath}
\begin{split}
\mathcal{S}& = \mathcal{S}_0 + \sum_{\mu=1}^{m} \mathcal{S}_{\mu} = \mathcal{S}_0 + O\left(XY^{\varepsilon - 1} + Y^{m-1} X^{1/2}\right).
\end{split}
\end{displaymath}
We proceed to manipulate $\mathcal{S}_0$. Let $Z > Y$ be another parameter. We have
$$\mathcal{S}_0 = \sum_{d_1, \ldots, d_m \leq D} \mu(d_1) \cdots \mu(d_m) \sum_{d_0} \mu(d_0) \sum_{ \substack{n \leq X  \\ d_0^2 \mid f(n) \\ d_j^2 \mid g_j(n)}} 1 = \mathcal{S}^{(1)} + \mathcal{S}^{(2)},$$
say, where $\mathcal{S}^{(1)}$ is the contribution of $d_0 \leq Z$ and $\mathcal{S}^{(2)}$ is the contribution $d_0 > Z$. 
 We first bound $\mathcal{S}^{(2)}$. We write $$d_0^2 k = f(n) = \frac{(2an + b)^2 - \Delta}{4a}.$$ For given $a, k, \Delta \not= 0$, the equation $4ak d_0^2 - (2an+b)^2 = -\Delta \not= 0$ is of Pellian type and has at most $O(\log X)$ solutions $(d_0, n)$ with $n \leq X$ (with an absolute implied constant). We obtain
\begin{displaymath}
\begin{split}
|\mathcal{S}^{(2)}|& \leq Y^m  \sum_{n \leq X} \sum_{\substack{d_0^2k = f(n) \\ d_0 > Z}} 1  \ll Y^m \sum_{k \leq  (|a|X^2 + |b|X + |c|)/Z^2} \log X \ll Y^m X^{2+\varepsilon}Z^{-2}. 
\end{split}
\end{displaymath}
In order to evaluate $\mathcal{S}^{(1)}$, we write $\textbf{d} = (d_0, d_1, \ldots, d_m)$ for squarefree $d_0, \ldots, d_m$ and define
$$\rho(\textbf{d}) := |\{n \, (\text{mod } [d_0^2, \ldots, d_m^2]) \mid f(n) \equiv 0 \, (\text{mod } d_0^2), \, g_j(n) \equiv 0 \, (\text{mod } d_j^2)\}|.$$
For notational simplicity we also write $\mu(\textbf{d}) = \mu(d_0) \cdots \mu(d_m)$. 
By the Chinese remainder theorem, $\rho$ is multiplicative in each variable and $\rho(p, d_1, \ldots, d_m) \leq 2$ for all primes $p \nmid \Delta$, hence $\rho(\textbf{d}) \ll \tau(d_0)$, where $\tau$ is the divisor function.  Moreover, 
\begin{equation}\label{nonzero}
  \rho(\textbf{d}) < [d_0^2, \ldots, d_m^2]
 \end{equation} 
for all $\textbf{d} \not= (1, \ldots, 1)$,    for otherwise at least one of $f, g_1, \ldots, g_m$ would have a fixed square divisor.   Now
\begin{displaymath}
\begin{split}
\mathcal{S}^{(1)} & =   \sum_{d_1, \ldots, d_m \leq Y} \sum_{d_0 \leq Z} \mu(\textbf{d}) \left(\frac{\rho(\textbf{d})}{[d_0^2, \ldots, d_m^2]} X + O(\rho(\textbf{d}))\right)\\
&=  \sum_{d_1, \ldots, d_m \leq Y} \sum_{d_0 \leq Z} \frac{ \mu(\textbf{d}) \rho(\textbf{d})}{[d_0^2, \ldots, d_m^2]} X  + O\left(Y^m Z\log Z\right).
\end{split}
\end{displaymath}
The same argument as in \eqref{trick} implies that we may complete the multiple sum at the cost of an error $O(X Y^{\varepsilon - 1})$, and the resulting Euler product is absolutely convergent with value $C \not = 0$   by \eqref{nonzero}. Combining everything, we have shown
\begin{displaymath}
\begin{split}
\mathcal{S} &= CX + O\left( Y^m Z\log Z + Y^mX^{2+\varepsilon} Z^{-2} + XY^{\varepsilon - 1} + Y^{m-1} X^{1/2}\right),
\end{split}
\end{displaymath}
and the lemma follows upon choosing $Z = X^{2/3}$, $Y = X^{\frac{2}{3(m+1)}}.$ 
\end{proof}

\section{Continued fractions and elements of small norm}

First we collect some useful results on continued fractions. 
Let $\gamma=[a_0, a_1, \dots]$ be an infinite continued fraction of a real number $\gamma > 0$, let and $p_i/q_i=[a_0, \dots, a_i]$ be its $i$th approximation ($a_i, p_i, q_i\in\mathbb N$). Then it is easy to see and well-known that 
$p_{i+1}=a_{i+1}p_i+p_{i-1}$ and  $q_{i+1}=a_{i+1}q_i+q_{i-1}$ 
and 
$$\left\lvert \frac{p_i}{q_i} - \gamma \right\rvert<
\left\lvert \frac{p_i}{q_i} - \frac{p_{i+1}}{q_{i+1}} \right\rvert=
\frac {1}{q_iq_{i+1}}<
\frac {1}{a_{i+1}q_{i}^2}.$$
 Assume now that $\gamma=\sqrt D$ (with squarefree $D$) and let $$\alpha_i=p_i+q_i\sqrt D, \quad N_i=N(\alpha_i)=p_i^2-Dq_i^2.$$ Then
 $|p_i-q_i\sqrt D|<\frac 1{a_{i+1}q_i}\leq \frac 1{q_{i}}$ and so
 \begin{equation}\label{sizeN}
 |N_i|=(p_i+q_i\sqrt D)|p_i-q_i\sqrt D|<
\Bigl(2q_i\sqrt D+\frac 1{q_i}\Bigr)\frac {1}{a_{i+1}q_{i}}=
\frac {2\sqrt D}{a_{i+1}}+\frac {1}{a_{i+1}q_{i}^2}.
\end{equation}
Since $(p_i, q_i) = 1$, we see that $\alpha_i$ is not divisible by a rational integer $\geq 2$. \\ 

\emph{Remark:} We see that if $a_{i+1}$ is not too small compared to $\sqrt D$, then $N_i$ has a small norm. There are several explicit   examples of such continued fractions. For instance, 
take $b, n, k\in\mathbb N$ and let 
$$D=D(b, n, k)=(b(1+2bn)^k+n )^2+2(1+2bn)^k.$$
Then for each $j<k$, we have that $b_j=2b(1+2bn)^{k-j}$ appears as a coefficient in the continued fraction for $\sqrt D$ (see \cite[Section 3]{Ma}).  We have $\lfloor\sqrt D\rfloor=b(1+2bn)^k+n$, and so 
$$\frac {2\sqrt D}{b_j}<\frac{b(1+2bn)^k+n+1}{b(1+2bn)^{k-j}}<
(1+2bn)^{j}+1< D^\frac{j}{2k}+1.$$
Although this produces many elements of small norm, it is not so easy to choose $b, n, k$ so that $D$ is (almost) squarefree, and it is also not trivial to verify the other conditions of Proposition \ref{t4}. Therefore we proceed slightly differently in the following. \\



Let us now consider periodic continued fractions of the form 
\begin{equation}\label{gamma}
\gamma=[k; \overline{u, \dots, u, 2k}]
\end{equation}
with $\ell$ elements $u$ in the period. Friesen \cite{Fr} gave, for general periodic, symmetric continued fractions,  certain necessary and sufficient parity conditions that ensure that there are infinitely many $k$ such that  $\gamma=\sqrt D$ with squarefree $D$. We shall need more explicit information than this (in particular on the convergents $p_i/q_i$, see Proposition \ref{construct alpha} below), so let's compute the special case \eqref{gamma} in complete detail.  Let
\begin{equation}\label{rhoc}
\rho_{\pm} := \frac{1}{2}(u \pm \sqrt{u^2 + 4}), \quad c_{\pm} := \frac{\pm 1 \pm (k + \sqrt{D})\rho_\pm}{\sqrt{4+u^2}}, \quad c'_{\pm} := \frac{\pm 1 \pm (k - \sqrt{D})\rho_\pm}{\sqrt{4+u^2}}.
\end{equation}
Since $q_{-1}=0$, $q_0=1$, $q_1=u$, we have
$$q_i = \frac{\rho_+^{i+1} - \rho_-^{i+1}}{\rho_+ - \rho_-}$$
for $i \leq \ell$, so that $q_i$ is a function of $u$. Then we can evaluate  $p_i$ by 
  $p_i=kq_i+q_{i-1}$ for $i \leq \ell$.

Clearly, 
  $$\rho_+ > u, \quad \rho_+\rho_- = -1, \quad -\frac{1}{u} < \rho_- < 0, \quad c_+ > c_-. $$
We start with the following essentially well-known lemma. 
\begin{lemma}\label{q-lemma}  Let $\gamma$ be as in \eqref{gamma} and keep the notation developed so far. \\
{\rm a)}  For $0 \leq j < i \leq \ell$ we have $$q_iq_{j-1}-q_{i-1}q_j=(-1)^{j+1}q_{i-j-1} \quad \text{and} \quad 
q_iq_{i-2}-q_{i-1}^2=(-1)^i.$$
{\rm b)} 
For $0 \leq i \leq \ell$, the sequences  $p_i$, $q_i$,  $\alpha_i$ and $\alpha_i'$ are linear combinations of $\rho_+^i$ and $\rho_-^i$, and we have explicitly
  $$\alpha_i = c_+\rho^i_+ + c_- \rho^i_- \quad \text{and} \quad  \alpha'_i = c'_+\rho^i_+ + c'_- \rho^i_-.$$
 {\rm c)} For $0 \leq i \leq \ell$ and $u$ even, we have $q_i\equiv 0\pmod 2$ when 
 $i\equiv 1\pmod 2$. 
   \end{lemma}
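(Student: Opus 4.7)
The plan is to exploit that, for $1\leq i\leq \ell$, all the partial quotients $a_i$ equal $u$, so both $p_i$ and $q_i$ satisfy the single constant-coefficient recursion
\[ s_i = u s_{i-1} + s_{i-2}. \]
Every claim in the lemma is a consequence of this recursion together with the initial values $q_{-1}=0$, $q_0=1$, $q_1=u$ and $p_i=kq_i+q_{i-1}$.

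For part (a), I would first prove the second identity by setting $D_i:=q_iq_{i-2}-q_{i-1}^2$ and computing $D_{i+1}+D_i$ via the recursion: a short manipulation using $q_{i+1}-uq_i = q_{i-1}$ yields $D_{i+1}=(a_{i+1}-a_i)q_iq_{i-1}-D_i$, which reduces to $D_{i+1}=-D_i$ as long as $a_{i+1}=a_i=u$, i.e.\ for $1\leq i\leq \ell-1$. Combined with $D_1=-1$, this gives $D_i=(-1)^i$ for $1\leq i\leq\ell$. For the more general identity, fix $j$ and consider $E_i:=q_iq_{j-1}-q_{i-1}q_j$ as a function of $i$. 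A direct substitution using $q_{i+1}=uq_i+q_{i-1}$ shows $E_{i+1}=uE_i+E_{i-1}$, i.e.\ $E_i$ satisfies the same recursion as $q_m$. Since $E_j=0$ and $E_{j+1}=D_{j+1}=(-1)^{j+1}$, matching initial conditions with the sequence $q_{-1}=0,q_0=1,q_1=u,\ldots$ forces $E_i=(-1)^{j+1}q_{i-j-1}$.

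For part (b), I would observe that the characteristic polynomial of the recursion is $x^2-ux-1=0$, whose roots are precisely $\rho_{\pm}$. Hence any sequence satisfying $s_i=us_{i-1}+s_{i-2}$ in the range $0\leq i\leq\ell$ has the form $A\rho_+^i+B\rho_-^i$. Applying this to $q_i$ and plugging the initial conditions $q_{-1}=0$, $q_0=1$ recovers the formula $q_i=(\rho_+^{i+1}-\rho_-^{i+1})/(\rho_+-\rho_-)$ already recorded above; then $p_i=kq_i+q_{i-1}$, $\alpha_i=p_i+q_i\sqrt D$, and $\alpha_i'=p_i-q_i\sqrt D$ inherit the linear-combination structure. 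To identify the coefficients $c_\pm$ and $c_\pm'$ in \eqref{rhoc}, I match at $i=0$ and $i=1$: using $\rho_++\rho_-=u$, $\rho_+\rho_-=-1$, and $\rho_+-\rho_-=\sqrt{u^2+4}$, one checks directly that $c_++c_-=k+\sqrt D=\alpha_0$ and $c_+\rho_++c_-\rho_-=(ku+1)+u\sqrt D=\alpha_1$, with the analogous statement for the conjugate sequence.

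For part (c), when $u$ is even, reducing the recursion modulo $2$ gives $q_i\equiv q_{i-2}\pmod 2$, so the parity of $q_i$ is determined by that of $i$; since $q_0=1$ and $q_1=u$, an immediate induction on $i$ (valid throughout $0\leq i\leq \ell$ where the recursion with $a_i=u$ applies) yields $q_i\equiv 0\pmod 2$ exactly when $i$ is odd. The main technical nuisance is tracking the range $i\leq\ell$ carefully, because the recursion breaks at $i=\ell+1$ where $a_{\ell+1}=2k\neq u$; otherwise all the ingredients are standard manipulations and the verification of the closed forms $c_\pm$, $c_\pm'$ is a straightforward but slightly fiddly computation.
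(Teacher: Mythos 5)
Your proof is correct and rests on the same foundation as the paper's: every part is deduced from the constant-coefficient recurrence $s_{i+1}=us_i+s_{i-1}$ (valid for $i\le \ell-1$, as you rightly flag) together with the initial data $q_{-1}=0$, $q_0=1$ and $p_i=kq_i+q_{i-1}$. The only structural difference is in part (a): the paper telescopes $q_iq_{j-1}-q_{i-1}q_j$ downward in both indices at once to reach $(-1)^j(q_{i-j}q_{-1}-q_{i-j-1}q_0)$ and then gets the determinant identity by specializing $j=i-1$, whereas you prove the $j=i-1$ case first and recover the general identity by noting that $E_i$ satisfies the same recurrence as $q_{i-j-1}$ with matching initial values at $i=j,\,j+1$; both routes are equally valid, and your explicit check of $c_\pm$, $c_\pm'$ in (b) supplies a computation the paper leaves as ``well-known.''
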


\begin{proof} \ \\a) We have
\begin{displaymath}
\begin{split}
& q_iq_{j-1}-q_{i-1}q_j=(uq_{i-1}+q_{i-2})q_{j-1}-q_{i-1}(uq_{j-1}+q_{j-2})\\
= &
(-1)(q_{i-1}q_{j-2}-q_{i-2}q_{j-1})=\dots=
(-1)^{j}(q_{i-j}q_{-1}-q_{i-j-1}q_{0})=(-1)^{j+1}q_{i-j-1},
\end{split}
\end{displaymath}
because $q_0=1$ and $q_{-1}=0$. We obtain the second identity by taking $j=i-1$.\\
b) This follows in a well-known fashion from the recurrence $\alpha_{i+1} = u \alpha_i + \alpha_{i-1}$ for $i \leq \ell - 1$. \\
c) This follows directly from the recurrence relation. 
\end{proof}

The following lemma contains some technical estimates for the quantities defined in \eqref{rhoc} for future reference. 
\begin{lemma}\label{technical}\ \\
{\rm a)} If $k \geq u$, then  $c_- > 0$.\\
{\rm b)}  If $u \geq 2$, then $\rho_-^2 < |c_-'| < 1.$\\
{\rm c)}  If $u \geq 2$, then $|c'_+| < 2 \rho_+^{-\ell}=2|\rho_-|^\ell$.\\ 
{\rm d)} If $n \leq (\ell - 4)/2$ and $u \geq 2$, then $|c_+' \rho_+^n| < |c_-' \rho_-^n|/2$. 
\end{lemma}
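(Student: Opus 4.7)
My plan is to handle parts (a) and (b) by direct manipulation of the definitions in \eqref{rhoc}, while parts (c) and (d) will both be deduced from a single key identity that is forced by the periodicity of the continued fraction.

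For part (a), I rewrite
\begin{equation*}
c_- = \frac{(k+\sqrt{D})|\rho_-| - 1}{\sqrt{u^2+4}},
\end{equation*}
and observe that $k+\sqrt{D} > 2k \geq 2u$, whereas $\rho_+ = 1/|\rho_-| < 2u$ (since $\rho_+ = (u+\sqrt{u^2+4})/2 < u+1 \leq 2u$). Hence $(k+\sqrt{D})|\rho_-| > 1$ and $c_- > 0$. For part (b), I write $|c'_-| = (1 + (\sqrt{D}-k)|\rho_-|)/\sqrt{u^2+4}$ using $0 < \sqrt{D}-k < 1$. The upper bound $|c'_-| < 1$ reduces, via $\sqrt{u^2+4} = \rho_+ + |\rho_-|$, to $\rho_+ > 1$, which holds for $u \geq 2$. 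The lower bound $|c'_-| > \rho_-^2$ follows from $|c'_-| > 1/\sqrt{u^2+4}$ together with $\rho_+^2 > \sqrt{u^2+4}$, immediate from $\rho_+^2 = u\rho_+ + 1 > u^2 \geq \sqrt{u^2+4}$ for $u \geq 2$.

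For (c) and (d), the key identity I will derive is
\begin{equation*}
c'_+ c_+ \rho_+^{\ell-1} = c'_- c_- \rho_-^{\ell-1}.
\end{equation*}
To prove it, I use the standard formula $\beta_{i+1} = -\alpha'_{i-1}/\alpha'_i$ for the complete quotient at position $i+1$, obtained by solving $\sqrt{D} = (p_i \beta_{i+1} + p_{i-1})/(q_i \beta_{i+1} + q_{i-1})$ for $\beta_{i+1}$. Periodicity of the continued fraction with period $\ell+1$ gives $\beta_{\ell+2} = \beta_1 = 1/(\sqrt{D}-k)$, so $\sqrt{D} - k = -\alpha'_{\ell+1}/\alpha'_\ell$. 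Combined with the recursion $\alpha'_{\ell+1} = 2k\alpha'_\ell + \alpha'_{\ell-1}$, this collapses to the clean relation $\alpha'_{\ell-1} + (\sqrt{D}+k)\alpha'_\ell = 0$. Substituting $\alpha'_i = c'_+\rho_+^i + c'_-\rho_-^i$ and using $1 + (k+\sqrt{D})\rho_+ = c_+\sqrt{u^2+4}$ and $1 + (k+\sqrt{D})\rho_- = -c_-\sqrt{u^2+4}$, the two cross terms vanish and the identity drops out.

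With the key identity in hand, the remaining work is routine. I prove the elementary bound $|c_-/c_+| < 1/\rho_+$ by splitting on whether $k+\sqrt{D} \geq \rho_+$ (in which case $|c_-/c_+| < |\rho_-|/\rho_+ = 1/\rho_+^2$) or $k+\sqrt{D} < \rho_+$ (in which case a short calculation using only $k+\sqrt{D} > 1$ suffices). Combined with $|c'_-| < 1$ from (b), the key identity yields $|c'_+| < \rho_+^{-2\ell+1} < 2\rho_+^{-\ell}$ (the last step using $\rho_+ > 1$), which proves (c). For (d) I compute
\begin{equation*}
\left| \frac{c'_+ \rho_+^n}{c'_- \rho_-^n} \right| = \left| \frac{c_-}{c_+} \right| \rho_+^{2n - 2(\ell-1)} < \rho_+^{2n - 2\ell + 1};
\end{equation*}
for $n \leq (\ell-4)/2$ this is at most $\rho_+^{-\ell-3}$, which is $< 1/2$ since $\rho_+^3 > 2$ for $u \geq 2$. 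The only genuinely substantive step in the entire proof will be the derivation of the key identity from $\beta_{\ell+2} = \beta_1$; everything else is inequality manipulation.
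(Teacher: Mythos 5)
Your proofs of (a) and (b) are essentially the paper's: both rest on $k+\sqrt{D}>2k\geq 2u>\rho_+$ for (a) and on $0<\sqrt{D}-k<1/u$ (or $<1$) for (b), differing only in bookkeeping. For (c) and (d), however, you take a genuinely different route, and it checks out. The paper gets (c) cheaply from the Diophantine approximation bound $|\alpha_\ell'|=|p_\ell-q_\ell\sqrt{D}|<1/q_\ell\leq 1$: combined with $|c_-'\rho_-^\ell|<1$ from (b), the triangle inequality forces $|c_+'\rho_+^\ell|<2$; then (d) follows by playing the resulting upper bound on $|c_+'\rho_+^n|$ against the lower bound $|c_-'\rho_-^n|>\rho_+^{-n-2}$ from (b). You instead extract the exact relation $\alpha_{\ell-1}'=-(k+\sqrt{D})\alpha_\ell'$ from the periodicity $\beta_{\ell+2}=\beta_1$ together with $\alpha_{\ell+1}'=2k\alpha_\ell'+\alpha_{\ell-1}'$, and convert it via the closed forms of Lemma \ref{q-lemma}b) into the identity $c_+'c_+\rho_+^{\ell-1}=c_-'c_-\rho_-^{\ell-1}$; I verified both the derivation and the identity numerically (e.g.\ $u=2$, $\ell=1$, $D=6$), and your auxiliary bound $|c_-/c_+|<1/\rho_+$ is correct in both cases of your case split (in Case 2 one needs $(1-S)\rho_+<1$ for $S=k+\sqrt{D}$, which is automatic since $S>1$). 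What your approach buys is an exact multiplicative link between the primed and unprimed coefficients, hence sharper estimates ($|c_+'|<\rho_+^{1-2\ell}$ versus the paper's $2\rho_+^{-\ell}$) and a one-line proof of (d) that cleanly covers the boundary case $n=(\ell-4)/2$ (which the paper's final sentence only addresses for strict inequality); what it costs is the extra structural input from periodicity, where the paper needs only the standard approximation inequality for convergents. Both arguments are sound.
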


\begin{proof} \ \\
a) We observe that $k + \sqrt{D} > 2k \geq 2u > \rho_+$, so that $-1 - (k + \sqrt{D}) \rho_- > 0$. \\
b)  We have $[(\sqrt{D} - k)^{-1}] =u$, so that 
$0 > k - \sqrt{D} >     - u^{-1}.$ 
This implies 
$$c'_- \sqrt{4+u^2} = -1-(k - \sqrt{D}) \rho_- < -1,$$ so that in particular $c'_- < 0$ and $$|c_-'| > \frac{1}{\sqrt{4+u^2}} > \frac{1}{2u} \geq \frac{1}{u^2} > \rho^2_-.$$ For the other inequality we have 
$$|c_-'| \sqrt{4 + u^2} = 1 + (k- \sqrt{D}) \rho_- < 1 - \frac{2}{u} \rho_- < 1 + \frac{2}{u^2} < \sqrt{4 + u^2}.$$
c)   From b) we have $|c_-' \rho_-^\ell| < 1$. Since $|\alpha_\ell'| < 1$, we must have $|c_+' \rho_+^\ell| < 2$.\\ 
d) Since $\rho_+ > u \geq 2$, we conclude from part c) that $$|c_+' \rho_+^n| < 2 \rho_+^{n-\ell} < \frac{1}{2}\rho_+^{n-\ell+2}.$$ On the other hand, from part b) we obtain $$|c_-' \rho_-^n| > |\rho_-^{n+2} | = \rho_+^{-2-n}.$$
Combining the last two displays proves the claim for $n < (\ell - 4)/2$.
\end{proof}

\begin{prop}\label{t-express}
Let $\gamma $ be as in \eqref{gamma}.  
Then $\gamma=\sqrt D$ for some $D\in\mathbb N$ if and only if $q_\ell\mid kp_\ell+p_{\ell-1}$.  When $u$ and $q_{\ell}$ have the same parity, this condition is satisfied when $2k= q_\ell t+u$ for $t \in \Bbb{N}$ if $u$ is even, and $t \in \Bbb{N}$ odd if $u$ is odd. 
In this case, 
\begin{equation}\label{D}
D = D(t)=k^2+tq_{\ell-1}+1=t^2\frac{q_\ell^2}4+t\frac{uq_\ell+2q_{\ell-1}}{2}+\frac{u^2}{4}+1.
\end{equation}
Moreover, if $\alpha_i=p_i+q_i\sqrt D$ for $i \leq \ell$, then 
\begin{equation}\label{ni}
 N_i=N(\alpha_i)=(-1)^{i+1}(tq_iq_{\ell-i-1}+1)
 \end{equation}
  is a linear polynomial in $t$.
\end{prop}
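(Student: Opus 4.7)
The plan is to first derive the quadratic equation satisfied by $\gamma$, then translate the resulting divisibility condition into an arithmetic progression for $2k$, and finally obtain \eqref{ni} by direct substitution using both identities of Lemma \ref{q-lemma}a).

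By periodicity, the complete quotient of $\gamma$ after reading off the partial quotients $k,u,\ldots,u,2k$ equals the complete quotient after reading only $k$, namely $1/(\gamma-k)$. Applying the standard convergent formula at stage $\ell+1$ with complete quotient $\beta = 2k + 1/(\gamma-k) = k+\gamma$ yields
\[
 \gamma = \frac{(k+\gamma)p_\ell+p_{\ell-1}}{(k+\gamma)q_\ell+q_{\ell-1}},
\]
and after using $p_\ell = kq_\ell + q_{\ell-1}$ this collapses to $\gamma^2 q_\ell = k^2q_\ell + kq_{\ell-1}+p_{\ell-1}$. Hence $\gamma=\sqrt D$ with $D\in\mathbb N$ iff $q_\ell\mid kq_{\ell-1}+p_{\ell-1}$, which (adding $k^2 q_\ell$) is equivalent to $q_\ell\mid kp_\ell+p_{\ell-1}$; in this case $D = k^2+(kq_{\ell-1}+p_{\ell-1})/q_\ell$.

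To parametrize, I expand $p_{\ell-1}=kq_{\ell-1}+q_{\ell-2}$ and eliminate $q_{\ell-2}$ using $q_\ell = uq_{\ell-1}+q_{\ell-2}$, giving $kq_{\ell-1}+p_{\ell-1} = (2k-u)q_{\ell-1}+q_\ell$. Since $q_\ell q_{\ell-2}-q_{\ell-1}^2 = (-1)^\ell$ (Lemma \ref{q-lemma}a)) forces $\gcd(q_\ell,q_{\ell-1})=1$, the divisibility simplifies to $q_\ell\mid 2k-u$, i.e.\ $2k = q_\ell t + u$ for some integer $t$. The parity assumption on $u$ and $q_\ell$ is exactly what is needed to ensure $k\in\mathbb N$ for $t\in\mathbb N$ (and $t$ odd when $u$ is odd). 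Substituting back gives $D = k^2 + tq_{\ell-1}+1$ at once, and inserting $k = (q_\ell t+u)/2$ into $k^2$ produces the quadratic-in-$t$ form of \eqref{D}.

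For \eqref{ni}, plug $p_i=kq_i+q_{i-1}$ into $N_i=p_i^2-Dq_i^2$ to obtain $N_i = (k^2-D)q_i^2 + 2kq_iq_{i-1}+q_{i-1}^2$. With $k^2-D = -(tq_{\ell-1}+1)$ and $2k = q_\ell t+u$, the $u$-terms combine via $uq_iq_{i-1}+q_{i-1}^2 = q_{i-1}q_{i+1}$ to give
\[
 N_i = (q_{i-1}q_{i+1}-q_i^2) + t(q_\ell q_iq_{i-1}-q_{\ell-1}q_i^2).
\]
The first bracket equals $(-1)^{i+1}$ by the second identity of Lemma \ref{q-lemma}a) applied with index $i+1$, and the second equals $(-1)^{i+1}q_iq_{\ell-i-1}$ after factoring out $q_i$ and applying the first identity with indices $\ell$ and $i$; assembling yields $N_i = (-1)^{i+1}(tq_iq_{\ell-i-1}+1)$. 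The only delicate point is matching indices when invoking the two pieces of Lemma \ref{q-lemma}a); everything else is routine manipulation of the continued fraction recurrence.
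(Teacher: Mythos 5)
Your proof is correct and follows essentially the same route as the paper: the same convergent identity $\gamma=\frac{(k+\gamma)p_\ell+p_{\ell-1}}{(k+\gamma)q_\ell+q_{\ell-1}}$, the same reduction of the divisibility condition to $2k=q_\ell t+u$ (you make the sufficiency an equivalence via $\gcd(q_\ell,q_{\ell-1})=1$, which is a harmless strengthening), and the same expansion of $N_i$ using the two identities of Lemma \ref{q-lemma}a). The one point you gloss over is $i=\ell$: there $q_{\ell+1}$ is \emph{not} $uq_\ell+q_{\ell-1}$ (the next partial quotient is $2k$, not $u$), so the constant term $q_{\ell-1}(uq_\ell+q_{\ell-1})-q_\ell^2=(-1)^{\ell+1}$ requires the separate one-line verification that the paper explicitly supplies.
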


\begin{proof}
It is  a well-known property of continued fractions that
$$\frac{(k+\gamma)p_\ell+p_{\ell-1}}{(k+\gamma)q_\ell+q_{\ell-1}}=\gamma.$$
Using  $p_\ell=kq_\ell+q_{\ell-1}$ it then simplifies to 
$\gamma^2q_\ell=kp_\ell+p_{\ell-1}$.
We see that $\gamma=\sqrt D$ if and only if $q_\ell \mid kp_\ell+p_{\ell-1}=k^2q_\ell+2kq_{\ell-1}+q_{\ell-2}$; in other words, $q_\ell=uq_{\ell-1}+q_{\ell-2} \mid 2kq_{\ell-1}+q_{\ell-2}$, we obtain that we can take $2k=q_\ell t +u$, and \eqref{D} follows.  

Since $p_i=kq_i+q_{i-1}$, we get
\begin{displaymath}
\begin{split}
N_i&=p_i^2-Dq_i^2=(kq_i+q_{i-1})^2-(k^2+tq_{\ell-1}+1)q_{i}^2\\
&=
k^2q_i^2+2kq_iq_{i-1}+q_{i-1}^2-k^2q_i^2-tq_{\ell-1}q_{i}^2-q_{i}^2\\
&=
(q_\ell t+u)q_iq_{i-1}+q_{i-1}^2 -tq_{\ell-1}q_{i}^2-q_{i}^2\\
&=
q_it(q_\ell q_{i-1}-q_{\ell-1}q_i)+q_{i-1}(q_{i-1}+uq_i)-q_i^2\\
&=q_it(-1)^{i+1}q_{\ell-i-1}+q_{i-1}q_{i+1}-q_i^2=
(-1)^{i+1}(tq_iq_{\ell-i-1}+1),
\end{split}
\end{displaymath}
where in the last two equalities we have used the two identities from Lemma \ref{q-lemma}a) and the recurrence $q_{i+1}=uq_i+q_{i-1}$ for $i \leq \ell-1$. 
When $i=\ell$, then $q_{\ell+1}$ is not given by this recurrence, but we still have $q_{\ell-1}(q_{\ell-1}+uq_\ell)-q_\ell^2=(-1)^{\ell+1}$.
\end{proof}

We are now ready to state and prove the first key ingredient for the proof of Theorem \ref{thm1}. 

\begin{prop}\label{construct alpha}
For every  $u \in \Bbb{N}$ with   $u \equiv 2\pmod {4}$ and 
$\frac{1}{4}u^2+1$ squarefree and every   odd $\ell \in \Bbb{N}$, there exist  infinitely many squarefree $D\equiv 2\pmod 4$ with $\sqrt D=[k; \overline{u, \dots, u, 2k}]$ with $\ell$ elements $u$ in the period, such that for $1 \leq i <  j \leq (\ell-1)/2$ and $i \equiv j \equiv 1 \pmod 2$, we have 
\begin{enumerate}
\item $N(\alpha_i)$ is squarefree and not $\pm1$,
\item $n\nmid\alpha_i$ for $2 \leq n\in\mathbb N$,
\item $N(\alpha_i)<2\frac{\sqrt D}{u}+\frac 1u\frac{\sqrt D}{u}$ and $\alpha_i \succ 0$. 
\end{enumerate}
\end{prop}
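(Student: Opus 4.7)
The plan is to parametrize $\sqrt D$ via Proposition \ref{t-express} and then apply Lemma \ref{squarefree}, combined with its mod-$4$ remark, to simultaneously achieve squarefreeness of $D$ and of each $N(\alpha_i)$ together with the congruence $D \equiv 2 \pmod 4$. Since $u$ is even and $\ell$ odd, Lemma \ref{q-lemma}(c) makes $q_\ell$ even; write $q_\ell = 2q$ and $u = 2v$ with $v$ odd. Setting $2k = q_\ell t + u$, Proposition \ref{t-express} yields the quadratic $D(t) = q^2 t^2 + (2vq + q_{\ell-1})\, t + v^2 + 1$ and, for odd $i$, the linear $N_i(t) = tq_iq_{\ell-i-1} + 1$. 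The easy conditions then drop out for $t \geq 1$: $N_i(t) \geq 2$ so $N(\alpha_i) \neq \pm 1$; $N_i(t) > 0$ combined with $p_i + q_i\sqrt D > 0$ forces $p_i - q_i\sqrt D > 0$, so $\alpha_i \succ 0$; the norm bound in (3) is exactly \eqref{sizeN} with $a_{i+1} = u$ (valid since $i < \ell$), the residual $1/(uq_i^2)$ absorbed into $\sqrt D/u^2$; and once $D \equiv 2 \pmod 4$ (so that $\mathcal O_K = \mathbb Z[\sqrt D]$), condition (2) reduces to $\gcd(p_i, q_i) = 1$, automatic for continued fraction convergents.

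The main step is to apply Lemma \ref{squarefree} to $f(x) = D(x)$ and $g_j(x) = N_{i_j}(x)$ with $i_j$ ranging over the odd integers in $[1,(\ell-1)/2]$. The discriminant of $D$ as a polynomial in $t$ simplifies via the recursion and Lemma \ref{q-lemma}(a) to $uq_\ell q_{\ell-1} + q_{\ell-1}^2 - q_\ell^2 = (-1)^{\ell+1} = 1$, hence is nonzero. The ``at least one squarefree value'' hypothesis is trivial for each $N_i$ (value $1$ at $t = 0$) and holds for $D$ because the proposition assumes $D(0) = u^2/4 + 1$ squarefree; this is precisely why that hypothesis appears.

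Finally, to force $D(t) \equiv 2 \pmod 4$ I use the remark after Lemma \ref{squarefree} and substitute $t \mapsto 4t$: since $v$ is odd, $v^2 + 1 \equiv 2 \pmod 4$, while the two other terms of $D(4t)$ become divisible by $4$, so $D(4t) \equiv 2 \pmod 4$ for every $t$. Lemma \ref{squarefree} then produces a positive proportion of such $t$ for which $D(4t)$ and every $N_i(4t)$ are simultaneously squarefree, giving infinitely many admissible $D$. The principal obstacle is coordinating the mod-$4$ restriction with the squarefree count, which is resolved cleanly by the remark, and verifying the (algebraic) nonvanishing of the discriminant of $D$, which is exactly what the identity in Lemma \ref{q-lemma}(a) provides.
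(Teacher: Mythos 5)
Your proof is correct and follows essentially the same route as the paper: parametrize $k$ via Proposition \ref{t-express}, observe that (2), (3) and the non-unit half of (1) are automatic for odd $i$ and $t\geq 1$, and apply Lemma \ref{squarefree} together with its mod-$4$ remark (the substitution $t\mapsto 4t$) to the quadratic $D(t)$ and the linear polynomials $N_i(t)$, using the squarefree constant terms $u^2/4+1$ and $1$ for the ``at least one squarefree value'' hypothesis and the identity $uq_\ell q_{\ell-1}+q_{\ell-1}^2-q_\ell^2=(-1)^{\ell+1}$ for the nonvanishing of the discriminant. The only item of the paper's proof you omit is the verification that $k_i\neq k_j$ for distinct odd $i,j\leq(\ell-1)/2$ (which the paper records for later use in separating square classes), but since none of the stated conclusions (1)--(3) involves $j$, this is not a gap in proving the proposition as written.
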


\begin{proof}  Since $u$ is even and $\ell$ is odd, it follows from Lemma \ref{q-lemma}c) that $q_{\ell}$ is even, so that Proposition \ref{t-express} is applicable. Recall from \eqref{ni} that
\begin{equation}\label{recall}
N_i = N_i(t)   = k_i t + 1, \quad k_i = q_iq_{\ell-i-1}
\end{equation}
for $i$ odd. In particular, $\alpha_i$ is not a unit for $i \leq \ell-1$, which is the second part of condition (1).  It follows from the recurrence relation similarly as in Lemma \ref{q-lemma}b) that
$$k_i = \alpha^2 \rho_+^{\ell-1} + \beta^2 \rho_-^{\ell-1} + \alpha \beta\left( (-1)^{\ell-i-1} \rho_+^{2i+1 - \ell} + (-1)^i \rho_{+}^{\ell - 2i - 1}\right)$$
for certain real numbers $\alpha, \beta$. 
The last parenthesis of the previous display is strictly decreasing in absolute value for $i \leq (\ell-1)/2$, so that $k_i \not= k_j$ for $1 \leq i <  j\leq (\ell-1)/2$ and $i \equiv j \equiv 1\pmod 2$. 

Conditions (2) and (3)  are automatic; notice that \eqref{recall} implies   $\alpha_i \succ 0$   if $i$ is odd. 
The first part of condition (1)  follows directly from Lemma \ref{squarefree} applied to the quadratic polynomial in \eqref{D} and the linear polynomials in \eqref{recall}. Notice that the constant terms of $D(t)$ and $N_i(t)$ are squarefree, so all considered polynomials take squarefree values. 
\end{proof}

The $\alpha_i$ constructed in the previous proposition satisfy properties \eqref{1} -- \eqref{3} from Proposition \ref{t4}, but a comparison of \eqref{D} and \eqref{ni} shows that $N\alpha_i \asymp \sqrt{D}$, so they fail to satisfy condition \eqref{5} from Proposition \ref{t4}. Hence we must find another way to ensure \eqref{4}, and the rest of this section is devoted to this task. \\
 
From now on we  assume that $\sqrt D=[k; \overline{u, \dots, u, 2k}]$ for a squarefree $D\equiv 2, 3\pmod 4$ with $k > u \geq 5$.  For $i \leq \ell-1$ we know that $\alpha_i=p_i+q_i\sqrt D$ is totally positive if and only if $i$ is odd, and we know from \eqref{sizeN} that  $$|N_i|=|N(\alpha_i)|<\frac {2\sqrt D}{u}+\frac {1}{uq_{i}^2}<\frac{1}{2}\sqrt D.$$

\begin{lemma}\label{noh}
Take odd $i<j\leq(\ell-4)/2$. There is no $h\geq 0$ such that
$\alpha_{i}\alpha_{j}\succ\alpha_h^2$.
\end{lemma}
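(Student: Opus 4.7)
My plan is to expand $D(h):=\alpha_i\alpha_j-\alpha_h^2$ and its conjugate $D(h)'$ using the closed forms from Lemma \ref{q-lemma}b), and then to verify for every $h\geq 0$ that at least one of these is strictly negative. Setting $s:=i+j$, $r:=j-i$, and $A:=\rho_+^r+\rho_-^r$, the expansion (exploiting $\rho_+\rho_-=-1$ and $i$ odd) yields
\[D(h)=c_+^2(\rho_+^s-\rho_+^{2h})-c_+c_-(A+2(-1)^h)+c_-^2(\rho_-^s-\rho_-^{2h}),\]
valid for $h\leq\ell$, with the analogous formula for $D(h)'$ in terms of $c_\pm'$. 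Note that $A>2$ because $r$ is a positive even integer.

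Three cases are essentially free. If $h>\ell$ the closed form no longer applies, but $\alpha_h\geq\alpha_\ell\approx c_+\rho_+^\ell$ makes $\alpha_h^2$ overwhelm $\alpha_i\alpha_j\asymp c_+^2\rho_+^s$ (using $s\leq\ell-4$), so $D(h)<0$ trivially. If $h\leq\ell$ and $2h>s$, the first term $c_+^2(\rho_+^s-\rho_+^{2h})$ of $D(h)$ is negative and of order $c_+^2\rho_+^{2h}$, clearly dominating the other two, so $D(h)<0$. If $2h=s$, the first and third terms vanish and $D(h)=-c_+c_-(A+2(-1)^h)<0$, using $c_+,c_->0$ (Lemma \ref{technical}a), noting that $k\geq u$) together with $A-2>0$.

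The delicate case is $2h<s$, where $D(h)>0$ and I instead target $D(h)'<0$. The crucial additional input is the sign $c_+'c_-'>0$: a short algebraic manipulation using $\rho_+^2=u\rho_++1$ together with $D=k^2+tq_{\ell-1}+1$ and $2k=q_\ell t+u$ from Proposition \ref{t-express} produces $c_+c_+'=-t\rho_-^{\ell-1}/(4+u^2)$, which is negative because $\ell$ is odd; hence $c_+'<0$, and combined with $c_-'<0$ (already implicit in the proof of Lemma \ref{technical}b)) we conclude $c_+'c_-'>0$. Consequently both the middle term $-c_+'c_-'(A+2(-1)^h)$ and the third term $(c_-')^2(\rho_-^s-\rho_-^{2h})$ of $D(h)'$ are negative, while the only positive contribution $(c_+')^2(\rho_+^s-\rho_+^{2h})\leq 4\rho_+^{s-2\ell}$ is tiny by Lemma \ref{technical}c). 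To show it is dominated by the middle term it suffices to prove $|c_-'|(A-2)>|c_+'|\rho_+^s$; using $|c_-'|>\rho_+^{-2}$ (Lemma \ref{technical}b)) and $A-2\geq\tfrac12\rho_+^r$ (valid because $r\geq 2$ yields $\rho_+^r\geq u^2\geq 25$), this reduces to $\rho_+^{\ell-2i-2}>4$. The hypothesis $i<j\leq(\ell-4)/2$ with $i,j$ odd forces $i\leq(\ell-8)/2$, so $\ell-2i-2\geq 6$ and $\rho_+^6\geq 5^6$ settles the matter. The main obstacle is precisely this three-term comparison for $D(h)'$, and the constraint $j\leq(\ell-4)/2$ in the hypothesis provides exactly the amount of room it needs.
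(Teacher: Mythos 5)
Your proof is correct and reaches the same contradiction as the paper's, and the overall architecture coincides: the range $2h\ge i+j$ is excluded in the real embedding by comparing the dominant terms $c_+^2\rho_+^{2h}$ and $c_+^2\rho_+^{i+j}$, and the range $2h<i+j$ is excluded in the conjugate embedding. The difference lies in how the conjugate half is executed. The paper bounds $\alpha_i'\alpha_j'$ from above and $(\alpha_h')^2$ from below via Lemma \ref{technical}d) (the comparison $|c_+'\rho_+^n|<|c_-'\rho_-^n|/2$) to deduce $i+j\le 2h$; you instead expand $\alpha_i'\alpha_j'-(\alpha_h')^2$ and exploit the sign $c_+'c_-'>0$, obtained from the exact identity $c_+c_+'=-t\rho_-^{\ell-1}/(4+u^2)$, which I verified: the numerator $1+2k\rho_++(k^2-D)\rho_+^2$ collapses using $\rho_+^2=u\rho_++1$ and $q_\ell-q_{\ell-1}\rho_+=\rho_-^\ell$. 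This makes two of the three terms of $D(h)'$ manifestly negative and reduces everything to the single comparison $|c_-'|(A-2)>|c_+'|\rho_+^s$, which your exponent count $\ell-2i-2\ge 6$ settles; your treatment of $2h=i+j$ via $A>2$ is also slightly cleaner than the paper's parity case distinction on $h$. What the approach costs is two hidden hypotheses: the sign of $c_+'$ depends on $\ell$ being odd (for even $\ell$ your identity gives $c_+'>0$, the middle term of $D(h)'$ flips sign, and the argument breaks), and the identity itself invokes the parametrization $2k=q_\ell t+u$, $D=k^2+tq_{\ell-1}+1$ of Proposition \ref{t-express}. Neither appears in the formal hypotheses of Lemma \ref{noh} or in the standing assumptions of the section, though both hold in the lemma's only application (Proposition \ref{construct alpha}), whereas the paper's route through Lemma \ref{technical}d) needs neither; you should state these extra assumptions explicitly.
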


\begin{proof}
Assume that $\alpha_{i}\alpha_{j}\succ\alpha_h^2$.
First note that the sequence $\alpha_n$ is strictly increasing, and so $h<2j+1$. Hence the explicit formulae from Lemma \ref{q-lemma}b) hold for $\alpha_i$, $\alpha_j$ and $\alpha_h$. 
From Lemma \ref{technical}a) we have that  $c_+>c_->0$. Now, for $h \geq 1$ we have  
\begin{displaymath}
\begin{split}
c_+^2\rho_+^{2h-\frac 12}&<c_+^2(\rho_+^{2h}-2)<c_+^2\rho_+^{2h}+2c_+c_-(-1)^h<\alpha_h^2\\
&<
\alpha_i\alpha_j=(c_+\rho_+^i+c_-\rho_-^i)(c_+\rho_+^j+c_-\rho_-^j)<
c_+^2\rho_+^{i+j}
\end{split}
\end{displaymath}
(in the last inequality we have used that $\rho_-$ is negative and $i, j$ are odd).
Hence $2h-\frac{1}{2}<i+j$, and so $2h\leq i+j$. With slightly more precise estimates we can even exclude the case $2h = i+j$. In this case, the inequality $\alpha_i\alpha_j > \alpha_h^2$ is after some simplification equivalent to 
$$c_-c_+(\rho_+^i\rho_-^j + \rho_-^i\rho_+^j) > 2 c_-c_+(-1)^h.$$
Since  $c_-$ and     $c_+$ are positive and $i, j$ are odd, the left hand side is negative, so that $h$ must be odd, and we obtain $2 > \rho_+^{i-j} +  \rho_+^{j-i} $ which is a contradiction. We conclude that $2h < i+j$. 

 On the other hand, since $i, j \leq (\ell - 4)/2$ we have  from Lemma \ref{technical}(d) that 
$$\alpha_i' \alpha_j' < (|c_+' \rho_+^i| + |c_-'\rho_-^i|)(|c_+' \rho_+^j| + |c_-'\rho_-^j|) < \Bigl(\frac{3}{2} \Bigr)^2 |{c'_-}^2| |\rho_-|^{i+j}.$$
If $2h \leq i+j \leq \ell -4$, then  from Lemma \ref{technical}b) and c) we have 
 \begin{displaymath}
 \begin{split}
   (\alpha_h')^2 & = (c'_-\rho_-^h)^2 + 2 c_-'c_+'\rho_+^h\rho_-^h + (c_+' \rho_+^h)^2 > (c'_-\rho_-^h)^2 - 2 |c_-'c_+'| \\
   &> (c'_-\rho_-^h)^2 - 4 |c_-'\rho_-^{\ell}|  > (c'_-\rho_-^h)^2 (1 - 4|\rho_-|^{\ell - 2 - 2h}) > (c'_-\rho_-^h)^2 (1 - 4u^{-2}) = \frac{21}{25} (c'_-\rho_-^h)^2.
   \end{split}
 \end{displaymath}
Now our assumption $\alpha_i'\alpha_j' \geq (\alpha_h')^2$ implies $ |\rho_-|^{i+j} > \frac{1}{3} |\rho_-|^{2h} >  |\rho_-|^{2h+1}$, so that $i+j < 2h+1$, and hence $i+j \leq 2h$. This contradicts our previous conclusion $2h < i+j$, so that the original assumption $\alpha_{i}\alpha_{j}\succ\alpha_h^2$ was wrong. 
 \end{proof}

\begin{prop}\label{prop11}
Take odd $i<j\leq(\ell-4)/2$. 
Assume that there is $\mu\in\mathcal O_K$ such that 
$\alpha_i\alpha_j\succ\mu^2$. Then $\mu=0$.
\end{prop}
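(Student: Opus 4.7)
The plan is to reduce to Lemma \ref{noh} by showing that any $\mu \in \mathcal{O}_K$ satisfying $\alpha_i\alpha_j \succ \mu^2$ with $\mu \neq 0$ must be of the form $\mu = d\alpha_h$ for some positive integer $d$ and some index $h \geq 0$. Once this is in hand, $(d^2 - 1)\alpha_h^2$ is a nonnegative integer multiple of the totally nonnegative square $\alpha_h^2$, so $\mu^2 = d^2\alpha_h^2 \succeq \alpha_h^2$, whence $\alpha_i\alpha_j \succ \alpha_h^2$, contradicting Lemma \ref{noh}.

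To implement this, I write $\mu = a + b\sqrt{D}$ with $a, b \in \mathbb{Z}$ (possible since $D \equiv 2, 3 \pmod{4}$) and, replacing $\mu$ by $-\mu$ if necessary, assume $b \geq 0$. Taking both embeddings of $\alpha_i\alpha_j \succ \mu^2$ and multiplying yields $N(\mu)^2 < N_i N_j$; combining this with \eqref{sizeN} and the standing assumption $u \geq 5$ gives $|N(\mu)| < \sqrt{D}/2$. The conjugate inequality $(\mu')^2 < \alpha_i'\alpha_j'$, together with $\alpha_i\alpha_j > 4 q_iq_j D$ (using $p_i > q_i\sqrt{D}$ for odd $i$), gives $(\mu')^2 < 1/(u^2 q_iq_j) < 1$, so $|a - b\sqrt{D}| < 1$.

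If $b = 0$ then $|a| < 1$ forces $a = 0$ and we are done. Otherwise $b \geq 1$, and $|a - b\sqrt{D}| < 1 < b\sqrt{D}$ forces $a \geq 1$. Setting $d = \gcd(a, b)$ and writing $a = d a_0$, $b = d b_0$ with $\gcd(a_0, b_0) = 1$, the bounds above combine to give
$$\left|\frac{a_0}{b_0} - \sqrt{D}\right| = \frac{|N(\mu)|/d^2}{b_0(a_0 + b_0\sqrt{D})} \leq \frac{|N(\mu)|}{b_0^2 \sqrt{D}} < \frac{1}{2 b_0^2},$$
so Legendre's theorem on best rational approximations forces $a_0/b_0 = p_h/q_h$ for some $h \geq 0$, whence $\mu = d\alpha_h$, completing the reduction. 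The main obstacle is this Legendre step: \eqref{sizeN} only gives $|N_i| \approx 2\sqrt{D}/u$, so the bound $|N(\mu)| < \sqrt{D}/2$ needs $u$ strictly greater than $4$, which is precisely why the standing assumption is $u \geq 5$.
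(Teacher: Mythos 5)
Your proof is correct and follows essentially the same route as the paper's: bound $|N(\mu)| < \tfrac12\sqrt{D}$, deduce $|a_0/b_0 - \sqrt{D}| < 1/(2b_0^2)$, invoke Legendre's theorem (\cite[Theorem 184]{HW}) to identify $\mu$ with a convergent up to a rational factor, and contradict Lemma \ref{noh}. Your explicit treatment of $d=\gcd(a,b)$ is in fact slightly more careful than the paper, which passes directly to $\mu=\alpha_h$ rather than $\mu=d\alpha_h$; your observation that $\mu^2 = d^2\alpha_h^2 \succeq \alpha_h^2$ still forces $\alpha_i\alpha_j \succ \alpha_h^2$ and closes that small gap, while your direct identity for $a_0/b_0-\sqrt{D}$ replaces the paper's two-case quadratic-root analysis with no loss.
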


\begin{proof} If $\mu \in \Bbb{Z}$, then $1>\alpha_i\pr\alpha_j\pr>\mu^2$, which is possible only for $\mu=0$. From now on assume that 
  $\mu\not\in\mathbb Z$. We will show that this  implies $\mu=\alpha_h$ which is a contradiction by the previous lemma.  

Let $\alpha_i\alpha_j=\mu^2+\nu$ with $\nu \succ 0$. Since also $\mu^2 \succ 0$, we have 
$\frac 14 D>N(\alpha_i\alpha_j)>N(\mu)^2$, and so $|N(\mu)|< \frac{1}{2} \sqrt D $. Let $\mu=x+y\sqrt D\not\in\mathbb Z$. Clearly $x$ and $y$ must have the same sign (otherwise $\alpha_i'\alpha_j' > (\mu')^2$ cannot be satisfied), and without loss of generality assume that 
$x, y>0$. We distinguish two cases. 

\emph{Case 1:} $\mu\pr<0$. Then $y^2D-x^2=|N(\mu)|<\frac{1}{2} \sqrt D $, and so $y^2D-\frac{1}{2}\sqrt D -x^2<0$, hence 
$\sqrt D$ lies between the roots of $2y^2 T^2-{T}-2x^2$.
Thus 
$$\sqrt D<\frac{1+\sqrt{1+16x^2y^2}}{4y^2}<
\frac{1}{4y^2}+\frac{\sqrt{1+8xy+16x^2y^2}}{4y^2}=
\frac xy+\frac 1{2y^2}.$$
By assumption $\frac xy<\sqrt D$, and so we see that $|\frac xy-\sqrt D|<\frac 1{2y^2}$, 
but as is well-known \cite[Theorem 184]{HW}, this implies that   $\mu=\alpha_h$ for some $h$.

\emph{Case 2:} $\mu\pr>0$. Then $x^2-y^2D=|N(\mu)|<\frac{1}{2} \sqrt D $, and so $2y^2D+\sqrt D-2x^2>0$. Since $\sqrt D>0$, we must have that $\sqrt D$ is greater than the positive root of the polynomial, i.e., 
$$\sqrt D>\frac{-1+\sqrt{1+16x^2y^2}}{4y^2}>
\frac{-1}{4y^2}+\frac{\sqrt{1-8xy+16x^2y^2}}{4y^2}=
\frac xy-\frac 1{2y^2}.$$
Since $\sqrt D<\frac xy$, we again obtain $|\frac xy-\sqrt D|<\frac 1{2y^2}$, and so 
$\mu=\alpha_h$ for some $h$.
\end{proof}

The  proof of Theorem \ref{thm1} is now a direct consequence of Propositions \ref{t4}, \ref{construct alpha} and \ref{prop11}.
 
\section{An example}

We conclude the paper with an explicit example for the field $\Bbb{Q}(\sqrt{73})$.  We will see that in such specific situation one can get a little further than the general results of the previous sections. We suppress some of the fairly straightforward computations. 
 

Since $73\equiv 1\pmod 4$, the ring of integers $\mathcal O_K=\mathbb Z[\omega]$, where $\omega=\frac {1+\sqrt{73}}{2}\approx 4.772$. 
We know from Lemma \ref{p3} that if $\alpha\in\mathcal O_K^+$ has norm $\leq 8<\sqrt {73}$ and is not divisible by any $n\in\mathbb Z$, $n>1$, then $\alpha$ is not the sum of two totally positive integers. We shall use the following elements of $\mathcal O_K$:
\begin{itemize}
\item $\varepsilon=943+250\omega\approx 2136$ is the fundamental unit, $N(\varepsilon)=-1$
\item $\rho=4+\omega\approx 8.772$ and $\rho\pr=5-\omega\approx 0.228$ are totally positive elements of norm 2
\item $\sigma=83+22\omega\approx 187.984$ and $\sigma\pr=105-22\omega\approx 0.016$ are 
totally positive elements of norm 3
\end{itemize}

We shall often use the following variant of Lemma \ref{p3}:

\begin{lemma}\label{lemma73}
Let $\gamma\in\{\rho, \rho\pr, \sigma, \sigma\pr, \rho\sigma, \rho\sigma\pr, \rho\pr\sigma, \rho\pr\sigma\pr, \allowbreak
\rho^2\sigma, \rho\prd\sigma\pr, \allowbreak
\rho\sigma\prd, \rho\pr\sigma^2, 
\allowbreak
2\rho, 2\rho\pr, \allowbreak 2\sigma, \allowbreak 2\sigma\pr, \allowbreak 
2\rho\sigma, \allowbreak 2\rho\pr\sigma\pr, \allowbreak
3\rho, 3\rho\pr\}$,
$\alpha\in\mathcal O_K$ and $\beta\in\mathcal O_K^+$. If $\gamma=\alpha^2+\beta$, then $\alpha=0$.
\end{lemma}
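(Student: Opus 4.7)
The plan is to argue by contradiction: assume $\alpha\neq 0$, so that $\alpha^2\in\mathcal O_K^+$ and the identity $\gamma=\alpha^2+\beta$ expresses $\gamma$ as a sum of two totally positive integers. I would then split the twenty listed values of $\gamma$ into two groups.

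\emph{Group one (Lemma~\ref{p3} applies directly).} For $\gamma\in\{\rho,\rho\pr,\sigma,\sigma\pr,\rho\sigma,\rho\sigma\pr,\rho\pr\sigma,\rho\pr\sigma\pr\}$ one has $N\gamma\in\{2,3,6\}\le\sqrt{73}=\delta$. Writing $\gamma=a+b\omega$ (for instance $\rho\sigma=728+193\omega$, $\rho\sigma\pr=24-5\omega$, $\rho\pr\sigma=19+5\omega$, $\rho\pr\sigma\pr=921-193\omega$) and running the Euclidean algorithm gives $\gcd(a,b)=1$, so no rational integer $n\ge 2$ divides $\gamma$. Lemma~\ref{p3} then rules out $\gamma$ being a sum of two totally positive integers, giving the desired contradiction.

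\emph{Group two (the other twelve $\gamma$).} Here $N\gamma\in\{8,12,18,24\}$ and Lemma~\ref{p3} fails (either $N\gamma>\delta$ or $\gamma$ is rationally divisible). Instead I would apply Lemma~\ref{l1} to the totally positive pair $\alpha^2,\beta$:
$$\sqrt{N\gamma}\;\ge\;\sqrt{N(\alpha^2)}+\sqrt{N\beta}\;\ge\;|N\alpha|+1,$$
whence $|N\alpha|\le\sqrt{24}-1<4$ and so $|N\alpha|\in\{1,2,3\}$. Since $K$ has class number one and $N\varepsilon=-1$, every such $\alpha$ takes the form $\pm\eta\varepsilon^k$ with $k\in\mathbb Z$ and $\eta\in\{1,\rho,\rho\pr,\sigma,\sigma\pr\}$, $|N\eta|=|N\alpha|$. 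The condition $\alpha^2\prec\gamma$ is equivalent to the two real inequalities $|\alpha|<\sqrt{\gamma}$ and $|\alpha\pr|<\sqrt{\gamma\pr}$; using $|\alpha|=|\eta|\,|\varepsilon|^k$ and $|\alpha\pr|=|\eta\pr|\,|\varepsilon|^{-k}$ they restrict $|\varepsilon|^k$ to an interval of multiplicative width $\sqrt{N\gamma}/|N\alpha|\le\sqrt{24}<5$. Since $|\varepsilon|\approx 2136$, at most one integer $k$ can meet these bounds for each pair $(\eta,\gamma)$, and the unique candidate is eliminated by a short numerical comparison using $\rho\approx 8.77$, $\rho\pr\approx 0.228$, $\sigma\approx 188$, $\sigma\pr\approx 0.016$, $\varepsilon\approx 2136$.

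The main obstacle will be the bookkeeping in Group two: one must examine up to five associate families for each of twelve values of $\gamma$. What makes each check a one-line comparison is precisely the large multiplicative gap $|\varepsilon|\approx 2136$ between successive associates of a given norm, which dwarfs the width $<5$ of the windows in question.
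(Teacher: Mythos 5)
Your proposal is correct and follows essentially the same route as the paper: Lemma \ref{p3} handles the eight values of norm $\le 6$, and for the remaining twelve Lemma \ref{l1} gives $|N\alpha|\le 3$, after which the size of the fundamental unit reduces everything to a short finite verification (the paper phrases this as the explicit candidate list $\alpha\in\{1,\rho,\rho\pr,\varepsilon\sigma\pr,\varepsilon\pr\sigma\}$ rather than your per-$(\eta,\gamma)$ window, but it is the same computation). Like the paper, you leave those finitely many checks to direct numerical comparison; they do genuinely need to be carried out, since the nearby values $2\rho\pr\sigma$, $\rho\sigma^2$ and $\rho^2\sigma\pr$ show the conclusion can fail for elements of the same shape.
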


\begin{proof}
If $\gamma\in\{\rho, \rho\pr, \sigma, \sigma\pr, \rho\sigma, \rho\sigma\pr, \rho\pr\sigma, \rho\pr\sigma\pr\}$, then the claim follows from Lemma \ref{p3}, as all these elements have norm $\leq 6<\sqrt{73}$.

Assume that $\alpha\neq 0$. Without loss of generality we can assume that $\alpha>0$.
By Lemma \ref{l1} we know that $\sqrt{N(\gamma)}\geq |N(\alpha)|+\sqrt{N(\beta)}$. Since 
$\gamma$ has norm at most 24, it follows that $|N(\alpha)|\leq \sqrt{24}-1\approx 3.9$. Hence $N(\alpha)=\pm 1, \pm 2, \pm 3$. 

We have  $\alpha^2<\gamma\leq \rho^2\sigma\approx 14465$, and so 
$\alpha<121$. Similarly $\alpha\prd<121$.
Since $\varepsilon$ is fairly big, we can check that the only possibilities are
$\alpha=1, \rho, \rho\pr, \varepsilon\sigma\pr, \varepsilon\pr\sigma$.
One then just checks that in each of these cases, $\gamma-\alpha^2$ is not totally positive.
\end{proof}
 
Note that three cases which do not satisfy Lemma \ref{lemma73} are 
$2\rho\pr\sigma\succ\rho^2$, $\rho\sigma^2\succ (\varepsilon\rho\pr)^2$ and $\rho^2\sigma\pr\succ 1^2$.

\

Let us now start constructing a universal quadratic form $L$:
$L$ has to represent 1, and so $L=L_1\oplus L_1\pr$ for $L_1=\langle 1\rangle$.

Since $\rho$ is not a square, $L_1$ does not represent $\rho$. Let 
$$L_2=\left( \begin{array}{cc}
1 & a \\
a & \rho \\
\end{array} \right).$$

We have $1\cdot \rho\succ a^2$, and so $a=0$ by Lemma \ref{lemma73}. Hence we obtain that $L=L_2\oplus L_2\pr$ with $L_2=\langle 1, \rho\rangle$.

Similarly $L_2$ does not represent $\sigma$.
Let 
$$L_3=
\left( \begin{array}{ccc}
1 & 0 & a \\
0 & \rho & b \\
a & b & \sigma \end{array} \right)$$
be an escalation of $L_2$. Then $\sigma\succ a^2$ and $\rho\sigma\succ b^2$, and so by Lemma \ref{lemma73} we see that $a=b=0$ and $L_3=\langle 1, \rho, \sigma\rangle$.

Now $L_3$ does not represent $\rho\pr$. Again we can show that all the off-diagonal elements after the escalation by $\rho\pr$ are zero, except for the one in the same row as $\rho$, so that
$$L_4=\langle 1, \sigma\rangle
\oplus\left( \begin{array}{cc}
\rho & a \\
a & \rho\pr \\
\end{array} \right),$$ 
where $2=\rho\rho\pr\succ a^2$. Thus $a=0, \pm 1$. However, in the case $a=-1$, the form is equivalent to the form with $a=1$. Hence it suffices to consider $a=0, 1$.

Next $L_4$ does not represent $\sigma\pr$: if it did, we would need to have $a=1$ and $\rho x^2+2xy+\rho\pr y^2=\sigma\pr$, which implies
$(\rho x+y)^2+y^2=\rho\sigma\pr$, a contradiction with Lemma \ref{lemma73}.
Let $L_5$ be the corresponding escalation.
We can again show that some off-diagonal elements are zero; we are left with 
$$L_5=\langle 1\rangle
\oplus\left( \begin{array}{cc}
\rho & a \\
a & \rho\pr \\
\end{array} \right)
\oplus\left( \begin{array}{cc}
\sigma & b  \\
b & \sigma\pr \end{array} \right),$$
where $b=0, 1$.

The only way how $L_5$ could represent 2
is if $a=1$ and $\rho x^2+2xy+\rho\pr y^2 \in \{1, 2\}$, 
or if $b=1$ and $\sigma x^2+2xy+\sigma\pr y^2\in \{1, 2\}$. 
These two cases imply
$(\rho x+y)^2+y^2 \in \{\rho, 2\rho\}$ and 
$(\sigma x+ y)^2+2y^2\in \{\sigma, 2\sigma\}$, respectively. But all of these conditions contradict Lemma \ref{lemma73}.

The escalation of $L_5$ by 2 is 
$$L_6^0=
\left( \begin{array}{cc}
\rho & a \\
a & \rho\pr \\
\end{array} \right)
\oplus\left( \begin{array}{cc}
\sigma & b  \\
b & \sigma\pr \end{array} \right)
\oplus\left( \begin{array}{cc}
1 &  c \\
c &  2 \end{array} \right).$$
We have $c^2\prec 2$, and so $c=0, 1$. If $c=1$,  the last matrix is equivalent to the diagonal form 
$$\left( \begin{array}{cc}
1 & 0 \\
0 & 1 \end{array} \right).$$

Hence we conclude that $L_6^0$ is equivalent  to the form 
$$L_6=\langle 1, m\rangle
\oplus\left( \begin{array}{cc}
\rho & a \\
a & \rho\pr \\
\end{array} \right)
\oplus\left( \begin{array}{cc}
\sigma & b  \\
b & \sigma\pr \end{array} \right),$$
where $m=1, 2$.

In the same manner as before we verify that $\rho\pr\sigma$ is not represented by $L_6$. The corresponding escalation is
$$L_7=\langle 1\rangle
\oplus\left( \begin{array}{cccc}
\rho & a & 0 & 0\\
a & \rho\pr & c & 0\\
0 & c & \rho\pr\sigma & d \\
0 & 0 & d & m
\end{array} \right)
\oplus\left( \begin{array}{cc}
\sigma & b  \\
b & \sigma\pr \end{array} \right),$$
where $c=0, 1$; $d=0$ if $m=1$ and $d=0, \rho$ if $m=2$.

Finally let us show that $L_7$ does not represent $\rho\sigma$. 
For this we distinguish two cases according to the value of $c$:

$c=0$: Then $$L_7=\langle 1\rangle
\oplus\left( \begin{array}{cc}
\rho & a \\
a & \rho\pr \\
\end{array} \right)
\oplus\left( \begin{array}{cc}
\sigma & b  \\
b & \sigma\pr \end{array} \right)
\oplus\left( \begin{array}{cc}
\rho\pr\sigma & d \\
d & m \end{array} \right).$$
As before, if $\rho\sigma$ were represented by this form, 
at least one of the equations 
$x^2+(x+\rho\pr y)^2=\rho\rho\pr\sigma=2\sigma$, 
$2x^2+(x+\sigma\pr y)^2=\rho\sigma\sigma\pr=3\rho$, and
$\rho x^2+(\rho x+2y)^2=2\rho\sigma$ (note here that $2\rho\pr\sigma-\rho^2=\rho$) would have a solution, 
but this is not possible by Lemma \ref{lemma73}.

$c=1$: Let us first show that in this case, $a=d=0$. Assume that $a=1$.
But then 
$$\left( \begin{array}{ccc}
\rho & 1 & 0 \\
1 & \rho\pr & 1 \\
0 & 1 & \rho\pr\sigma
\end{array} \right)$$
has determinant $\rho\pr\sigma-\rho$, which is not totally positive, a contradiction.

Assume now that $d=\rho$, which is possible only when $m=2$. Then
$$\left( \begin{array}{ccc}
 \rho\pr & 1 & 0\\
 1 & \rho\pr\sigma & \rho \\
 0 & \rho & 2
\end{array} \right)$$
has determinant $2\rho\prd\sigma-2\rho-2=0$, so it is again not totally positive.

Hence we see that in the case $c=1$, 
$$L_7=\langle 1, m, \rho\rangle
\oplus\left( \begin{array}{cc}
\rho\pr & 1 \\
1 & \rho\pr\sigma \\
\end{array} \right)
\oplus\left( \begin{array}{cc}
\sigma & b  \\
b & \sigma\pr \end{array} \right).$$
As before we verify that this form does not represent $\rho\sigma$.

\

We could probably continue a little longer by escalating $L_7$ by $\rho\sigma$, but the situation is becoming messy, so let's stop here by concluding that we have shown:

\begin{prop}
A (classical) universal  totally positive   quadratic forms over 
$\mathbb Z[\frac {1+\sqrt{73}}{2}]$ must have at least 8 variables. 
\end{prop}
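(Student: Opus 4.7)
The plan is to run a Bhargava-style escalation starting from $L_1=\langle 1\rangle$ and, at each stage, take a totally positive element of small norm that is not yet represented and form every admissible rank-increment. The candidate elements I would use, in order, are $\rho$, $\sigma$, $\rho\pr$, $\sigma\pr$, $2$, and $\rho\pr\sigma$; all are totally positive of norm at most $6<\sqrt{73}$, so a universal form must represent them. The key tool is Lemma~\ref{lemma73}: for each $\gamma$ in the explicit list given there, the relation $\gamma=\alpha^2+\beta$ with $\beta\succ 0$ forces $\alpha=0$. Combined with the requirement that every $2\times 2$ subdeterminant of the Gram matrix be totally positive, this typically forces each off-diagonal entry of the escalation to be $0$, or in a few borderline cases $\pm 1$, with the sign absorbable into a unimodular change of basis.

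Concretely I would verify step by step: $L_1$ cannot represent $\rho$ (not a square), so the off-diagonal in the escalation satisfies $\rho\succ a^2$, and Lemma~\ref{lemma73} forces $a=0$, giving $L_2=\langle 1,\rho\rangle$; similar arguments produce $L_3=\langle 1,\rho,\sigma\rangle$, and then the escalations by $\rho\pr$ and $\sigma\pr$ give $L_4$ and $L_5$ in which only a single $\pm 1$ off-diagonal entry can survive in each new rank-$2$ block. Escalating by $2$ gives $L_6$, where the new $2$-block can be replaced by $\langle 1,1\rangle$ via a basis change when its off-diagonal entry is $\pm 1$. Escalating finally by $\rho\pr\sigma$ yields a short list of rank-$7$ candidates $L_7$ parametrised by a handful of binary choices for off-diagonal entries $a,b,c,d$ together with one choice for a diagonal entry $m\in\{1,2\}$. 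The last step is to show that none of these candidates represents $\rho\sigma$: after completing the square in the appropriate block, this reduces to checking a finite list of equations of the form $(\text{linear})^2+(\text{totally positive})=\gamma$ with $\gamma\in\{2\sigma,3\rho,2\rho\sigma,\ldots\}$, all of which are ruled out by Lemma~\ref{lemma73}.

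The main obstacle is the bookkeeping in the last two escalations, where the Gram matrix is no longer essentially diagonal. In particular, in the subcase $c=1$ of $L_7$ one must additionally exclude $a=1$ and $d=\rho$ by computing $3\times 3$ subdeterminants such as $\rho\pr\sigma-\rho$ and $2\rho\prd\sigma-2\rho-2$ and verifying that they fail to be totally positive (in fact, the latter vanishes). Once this case analysis is carried out, no admissible $L_7$ represents $\rho\sigma$, and hence any universal lattice over $\mathcal O_K$ must have dimension at least $8$.
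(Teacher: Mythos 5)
Your proposal follows essentially the same route as the paper's own proof: the identical escalation sequence $1,\rho,\sigma,\rho\pr,\sigma\pr,2,\rho\pr\sigma$, the same reliance on Lemma~\ref{lemma73} to kill off-diagonal entries, and the same final step of ruling out representation of $\rho\sigma$ by the rank-$7$ candidates, including the two determinant computations $\rho\pr\sigma-\rho$ and $2\rho\prd\sigma-2\rho-2$ in the subcase $c=1$. The argument is correct as outlined.
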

  
Since the elements $1, 2, \rho, \rho', \sigma, \sigma', \rho \sigma, \rho \sigma', \rho' \sigma, \rho' \sigma'$ cover 10 different square classes and (with the exception of $2=1+1$) are not sums of totally positive elements, we can also conclude that a universal \emph{diagonal} form over $\mathbb Z[\frac {1+\sqrt{73}}{2}]$ must have at least 10 variables.

\end{document}